
\documentclass[a4paper, 10pt, twoside, english]{article}

\usepackage[utf8]{inputenc}
\usepackage[T1]{fontenc}
\usepackage[sc]{mathpazo}
\usepackage[english]{babel}
\usepackage{geometry}
\usepackage{amsmath,amsfonts,amssymb,amsthm}
\usepackage[final,colorlinks]{hyperref}
\usepackage{graphicx}
\usepackage{cleveref}
\hypersetup{
     colorlinks=true,
     linkcolor=black,
     filecolor=blue,
     urlcolor=Mahogany,
     }
\usepackage{mathtools}
\usepackage[dvipsnames]{xcolor}
\usepackage{pdflscape}
\usepackage{etoolbox}
\usepackage{fancyhdr}
\usepackage{lastpage}
\usepackage{ifdraft}
\usepackage[autostyle,italian=guillemets]{csquotes}
\usepackage[backend=bibtex, maxnames=50, firstinits=false, sorting=nyt]{biblatex}
\usepackage{colortbl}
\usepackage{booktabs}
\usepackage{hyphenat}
\usepackage{siunitx}
\usepackage[mathlines,pagewise]{lineno}

\addbibresource{myreferences.bib}
\graphicspath{{./images/}}

\newcommand*\patchAmsMathEnvironmentForLineno[1]{%
  \expandafter\let\csname old#1\expandafter\endcsname\csname #1\endcsname
  \expandafter\let\csname oldend#1\expandafter\endcsname\csname end#1\endcsname
  \renewenvironment{#1}%
  {\linenomath\csname old#1\endcsname}%
  {\csname oldend#1\endcsname\endlinenomath}}%
\newcommand*\patchBothAmsMathEnvironmentsForLineno[1]{%
  \patchAmsMathEnvironmentForLineno{#1}%
  \patchAmsMathEnvironmentForLineno{#1*}}%
\patchBothAmsMathEnvironmentsForLineno{equation}%
\patchBothAmsMathEnvironmentsForLineno{align}%
\patchBothAmsMathEnvironmentsForLineno{flalign}%
\patchBothAmsMathEnvironmentsForLineno{alignat}%
\patchBothAmsMathEnvironmentsForLineno{gather}%
\patchBothAmsMathEnvironmentsForLineno{multline}%

\newcommand{\dd}{\mathop{}\!\mathrm{d}}

\DeclareMathOperator{\Span}{Span}
\DeclareMathOperator{\dist}{dist}

\DeclareMathOperator{\Range}{Range}

\newcommand{\R}{\mathbb{R}}
\newcommand{\N}{\mathbb{N}}

\newcommand{\C}{\mathbb{C}}

\theoremstyle{plain}
\newtheorem{theorem}{Theorem}[section]

\newtheorem{proposition}[theorem]{Proposition}
\newtheorem{corollary}[theorem]{Corollary}

\theoremstyle{definition}
\newtheorem{remark}[theorem]{Remark}
\newtheorem{assumption}{Assumption}

\numberwithin{equation}{section}
\numberwithin{table}{section}
\numberwithin{figure}{section}

\usepackage[shortlabels]{enumitem}
\setlist[enumerate,1]{label=\textnormal{(\emph{\roman*})}}

\setcounter{MaxMatrixCols}{20}

\setlength{\headheight}{14pt}
\fancypagestyle{mypagestyle}{%
  \fancyhf{}%
  \fancyhead[LO,RE]{\scriptsize Stability of age-structured models with nonlocal diffusion}%
  \fancyhead[RO,LE]{\scriptsize D. Breda, S. De Reggi, R. Vermiglio}%
  \fancyfoot[LE,RO]{\scriptsize \thepage\,/\,\pageref*{LastPage}}%
}
\pagestyle{mypagestyle}

\overfullrule=1mm

\usepackage{authblk}
\usepackage{titling}

\usepackage{footnote}
\thanksmarkseries{arabic}

\title{A numerical method for  the stability analysis of linear age-structured models with nonlocal diffusion\thanks{\textbf{PRIN}: this work was partially supported by the Italian Ministry of University and Research (MUR) through the PRIN 2020 project (No.\ 2020JLWP23) ``Integrated Mathematical Approaches to Socio-Epidemiological Dynamics'' (CUP: E15F21005420006).}}

\author{Dimitri Breda\thanks{CDLAb - Computational Dynamics Laboratory, Department of Mathematics, Computer Science and Physics, University of Udine}\thanksgap{2mm}$^,$\thanks{\url{dimitri.breda@uniud.it}}
,\ Simone De Reggi\thanksmark{2}\thanksgap{2mm}$^,$\thanks{\url{simone.dereggi@uniud.it}},\ Rossana Vermiglio\thanksmark{2}\thanksgap{2mm}$^,$\thanks{\url{rossana.vermiglio@uniud.it}}}
\date{September 04, 2023}

\begin{document}
\maketitle
\begin{abstract}
We numerically address the stability analysis of linear age-structured population models with nonlocal diffusion, which arise naturally in describing dynamics of infectious diseases.
Compared to Laplace diffusion, models with nonlocal diffusion are more challenging since the associated semigroups have no regularizing properties in the spatial variable.  Nevertheless,  the asymptotic stability of the null equilibrium is determined by the spectrum of the infinitesimal generator associated to the semigroup.
We propose a numerical method to approximate the leading part of this spectrum by first reformulating the problem via integration of the age-state and then by discretizing the generator combining a spectral projection in space with a pseudospectral collocation in age. A rigorous convergence analysis proving spectral accuracy is provided in the case of separable model coefficients. Results are confirmed experimentally and numerical tests are presented also for the more general instance.  

\bigskip
\noindent\textbf{Keywords:} Age-structured population, asymptotic stability, nonlocal diffusion, principal eigenvalue, pseudospectral collocation, spectral projection

\smallskip
\noindent\textbf{2020 Mathematics Subject Classification:} 34L16, 47D99, 65L15, 65L60, 92D25
\end{abstract}
\section{Introduction}\label{Introduction}
Population dynamics are often described by taking into consideration physiological characteristics of individuals \cite{auger2008structured}. Among these, age structure and  spatial movement represent some of the most important features to consider in the spread of an infectious disease \cite{iannelli1995mathematical, inaba2017age, kang2021mathematical}.

Age-structured epidemic models with spatial diffusion are often formulated by means of partial differential equations in which the diffusive term is represented by the Laplace operator (see \cite{cusulin2006diffusion} and the references in \cite{kang2020age}).
However, the latter describes the random spread of individuals in adjacent spatial positions, thus it is not suitable for describing, e.g., the spread of a disease via long-distance traveling \cite{kuniya2018global}.  
In recent years, many authors \cite{bates2007existence, kao2010random, kuniya2018global, liu2015nonlocal, xu2021spatial, yang2019dynamics, zhao2018spatial} have considered nonlocal (convolution) diffusion operators of the form 
\begin{equation}\label{conv3}
\int_{\Omega}J(x-y)\left[u(y)-u(x)\right]\dd y
\end{equation}
in order to describe the long-distance dispersal of a population $u$ in a region $\Omega\subseteq \R^n$, where $J(x-y)$ is interpreted as the probability of jumping from position $y\in\Omega$ to $x\in\Omega$ \cite{garcia2009principal}. Thus, in \eqref{conv3}, the convolution $\int_{\Omega}J(x-y)u(y)\dd y$ represents the rate at which individuals are arriving at position $x\in\Omega$ from other places while $\int_{\Omega}J(y-x)u(x)\dd y$ represents the rate at which individuals are leaving location $x$ to travel to other sites.

These and similar models have been intensively studied in a series of recent papers \cite{ducrot2022age, kang2021mathematical, kang2021nonlinear, kang2022principal, kang2020age, webb1987operator}, where the authors developed the relevant theory of the associated semigroups of solution operators, their asymptotic behavior, their spectral properties, as well as asynchronous exponential growth and nonlinear dynamics.  Moreover, in \cite{kang2021approximation} the authors investigated how age-structured models with Laplace diffusion can be approximated by models involving nonlocal diffusion.
All these aspects are well understood when the evolution is considered on abstract spaces, leading to deal with infinite-dimensional dynamical systems. Consequently, when practical computations are required, numerical approximations enter the scene. 
 
\bigskip
In this work, in view of investigating the stability of linear age-structured population models with nonlocal diffusion of Dirichlet type, we propose to approximate the (leading part of the) spectrum of the infinitesimal generator associated to the relevant semigroup by reduction to finite dimension. This is achieved by combining a spectral projection in space with a pseudospectral collocation in age after integrating the state on the age interval in order to gain on regularity. Then we also extend to the Neumann case. For previous works on age-structured models with no or Laplace diffusion see \cite{ando2022pseudospectral, breda2007stability, breda2013numerical, breda2008stability, breda2006pseudospectral, breda2012computing}.

The main contributions consist in rigorously proving the convergence of the approximating eigenvalues to the exact ones in the case of separable model coefficients and in experimentally validating the obtained results on some test cases. Further numerical tests are presented also for the general instance. 

\bigskip
The work is organized as follows: in \cref{section2} we present the prototype age-structured model with Dirichlet nonlocal diffusion of interest and the relevant assumptions.
In \cref{Abstract setting} we illustrate the abstract setting and the reformulation via integration of the age-state.
The numerical approach is proposed in \cref{sezioneapproccio}. In \cref{sezdim} we develop the convergence analysis. In \cref{secimplementation} we provide some details about the implementation of the method, while \cref{numresults} contains numerical experiments. In \cref{neumann} we discuss some results for models with Neumann nonlocal diffusion. Finally, in \cref{conclusions} we provide some concluding remarks.

MATLAB demos are available on GitHub via \url{http://cdlab.uniud.it/software}.

\section{Age-structured models with Dirichlet nonlocal diffusion}\label{section2}
Let $u(t,a , x)$  denote the density of a population at time $t$, age $a\in\left[0, a^\dagger\right]$ and position $x\in\Omega$, where $t\ge 0$, $a^\dagger\in(0, \infty)$ and $\Omega\subseteq\R^n$ is open, bounded and connected with $C^1$ boundary.

Age-structured models with Dirichlet nonlocal diffusion take the form \cite{kang2020age}
\begin{equation}\label{modellononloc} 
\left\{\setlength\arraycolsep{0.1em}\begin{array}{rlll} 
\mathcal Du(t, a, x)&=&\displaystyle d\int_{\R^n}J(x-y)[u(t, a, y)\\[3mm]&&-u(t, a, x)]\dd y \\[2mm]&&-\mu(a, x) u(t, a, x),& t>0,\  a\in[0, a^\dagger],\ x\in\Omega,\\[2mm]
u(t, 0, x)&=&\displaystyle \int_0^{a^\dagger} \beta(\hat  a, x)u(t,\hat  a, x)\dd\hat  a,\quad &t>0,\ 
x\in\Omega,\\[3mm]
u(t, a, x)&=&0, & t>0,\ a\in[0, a^\dagger], \  x\in\R^n\setminus	\Omega,\\[2mm]
u(0, a, x)&=&u_0(a, x),& a\in[0, a^\dagger],\ x\in\Omega,
\end{array} 
\right. 
\end{equation}
for
\begin{equation}\label{grad}
\mathcal Du(t, a, x):=\partial_t u(t, a, x)+\partial_a u(t, a, x),
\end{equation}
and $\beta, \mu\colon [0, a^\dagger]\times \Omega \to \R$ and $d>0$ describing birth, mortality and diffusion, respectively. The kernel $J\colon \R^n\to \R$ is a $C^0$, compactly supported\footnote{The hypothesis that $J$ has compact support can be actually relaxed if one supposes $\Omega$ to be bounded \cite{kang2022principal}.} nonnegative radial function satisfying 
\begin{equation*}
\int_{\R^n}J(x)\dd x=1, \quad J(x)\ge 0,\quad J(0)>0.
\end{equation*}
Note that $J(x-y)=J(y-x)$ and  $\int_{\R^n}J(x-y)\dd y=1$ for \emph{a.a.} $x\in\R^n$.
Moreover, the diffusion takes place in the whole $\R^n$, but $u$ vanishes outside $\Omega$,
thus the integrals in \eqref{modellononloc} can be considered in $\Omega$ instead of $\R^n$.
As common in the literature \cite{garcia2009principal, inaba2017age}, $u$ is an $L^1$-function in age and  an $L^2$-function in space, thus we introduce the space $U:=L^1([0, a^\dagger], L^2(\Omega,\R))$\footnote{For simplicity we write $u(t, a, x)$ for $u(t)(a)(x)$ and we adopt a similar notation hereafter. Moreover, in general, when we write $(a, x)\in [0, a^\dagger]\times \Omega$, we implicitly mean a.e..} equipped with the norm 
\begin{equation*}
\|\phi \|_U:=\int_0^{a^\dagger} \|\phi(a, \cdot)\|_{L^2(\Omega,\R)}\dd a.
\end{equation*} 
Following these observations, given $u_0\in U$, we say that $u$ is a mild solution of \eqref{modellononloc} if $u\in C^0([0, +\infty), U)$,  $u$ is extended by zero in $\R^n\setminus\Omega$ and satisfies
\begin{equation}\label{model} 
\left\{\setlength\arraycolsep{0.1em}\begin{array}{rcll} 
\mathcal Du(t, a, x)&=&d\left(\mathcal Ju-u\right)(t, a, x)\\[2mm]&&-\mu(a, x) u(t, a, x),& t>0,\ a\in[0, a^\dagger],\ x\in\Omega,\\[2mm]
u(t, 0, x)&=&\displaystyle \int_0^{a^\dagger} \beta(\hat  a, x)u(t,\hat  a, x)\dd\hat  a,\quad& t>0,\ x\in\Omega,\\[3mm]
u(0, a, x)&=&u_0(a, x),& a\in [0, a^\dagger],\ x\in\Omega,
\end{array} 
\right. 
\end{equation}
in the sense of \cite[Theorem 2.3]{kang2020age} where, for brevity, we introduce $\mathcal J\colon U\to U$ defined as
\begin{equation*}
\mathcal J\phi(a, x):=\int_{\Omega}J(x-y)\phi(a, y)\dd y.
\end{equation*}
Also, from the biological point of view, we write 
\begin{equation*}
\mu(a, x)=\mu_0(a)+\mu_{1}(a, x),
\end{equation*}
where $\mu_0$ represents the natural mortality and $\mu_{1}$ describes how the spatial position of individuals affects the mortality depending on the age.

The following requirements are necessary to proceed with the analysis \cite{ducrot2022age}.
\begin{assumption}\label{Assumption 1}
\hspace{0mm}
\begin{enumerate}
\item \label{ass1i}  $\mu_0\in L^1_{\text{loc}}([0, a^\dagger), \R)$ is positive and
$\int_0^{a^\dagger}\mu_0(a) \dd a=+\infty$;
\item \label{ass1ii} $\mu_{1}\in L^{\infty}\left([0, a^\dagger],C\left(\overline\Omega,\R\right)\right)$ is nonnegative;
\item \label{ass1iii} $\beta\in L^{\infty}\left([0, a^\dagger],C\left(\overline\Omega,\R\right)\right)$ is nonnegative.
\end{enumerate} 
\end{assumption}
By defining $\tilde u$ through $u(t, a, x)=\Pi_0(a) \tilde u(t, a, x)$ \cite{iannelli1995mathematical, inaba2017age} for the natural survival probability
\begin{equation*}
\Pi_0(a):=\exp\left(-\int_0^{a}\mu_0(\hat a)\dd \hat a\right),
\end{equation*}
\eqref{model} becomes

\begin{equation}\label{modelb} 
\left\{\setlength\arraycolsep{0.1em}\begin{array}{rcll} 
\mathcal D\tilde u(t, a, x)&=&d\left(\mathcal J\tilde u-\tilde u\right)(t, a, x)\\[2mm]&&-\mu_{1}(a, x) \tilde u(t, a, x),& t>0,\ a\in[0, a^\dagger],\ x\in\Omega,\\[2mm]
\tilde u(t, 0, x)&=&\displaystyle\int_0^{a^\dagger} \beta(\hat  a, x)\Pi_0(\hat  a)\tilde u(t,\hat  a, x)\dd\hat  a,\quad& t>0,\ x\in\Omega,\\[3mm]
\tilde u(0, a, x)&=&\tilde u_0(a, x),& a\in [0, a^\dagger],\ x\in\Omega,
\end{array} 
\right. 
\end{equation}
where $\tilde u_0:=\Pi_0^{-1} u_0$.
In view of asymptotic stability, \eqref{model} and \eqref{modelb} are equivalent since $\Pi_0$ is bounded thanks to \cref{Assumption 1} \ref{ass1i}.

\section{Abstract setting}\label{Abstract setting}
According to \cite{kang2021nonlinear, kang2020age}, 
\eqref{modelb} can be seen as the abstract Cauchy problem 
\begin{equation}\label{model2} 
\left\{\setlength\arraycolsep{0.1em}\begin{array}{rcll} 
\tilde u'(t)&=&\mathcal A\tilde u(t),& t>0,\\[2mm]
\tilde u(0)&=&\tilde u_0 \in U,&
\end{array} 
\right. 
\end{equation}
where $\mathcal A\colon D(\mathcal A)\subseteq U\to U$ is defined by
\begin{equation*}
\mathcal A\phi(a, x):=d\left(\mathcal J\phi-\phi\right)(a, x)-\partial_a \phi(a, x)-\mu_{1}(a, x) \phi(a, x),
\end{equation*}
\begin{equation}\label{condomain}
D(\mathcal A):=\left\{\phi\in U\ |\ \partial_a\phi\in U\ \text{and}\  \phi(0,x)=\int_0^{a^\dagger} \beta(\hat a, x)\Pi_0(\hat a)\phi(\hat  a, x)\dd\hat  a\right\}.
\end{equation}
Note that if $\tilde u_0\in D(\mathcal A)$ then $\tilde u$ is a classical solution in the sense of \cite[Theorem 2.3]{kang2020age}.
Under \cref{Assumption 1}, $\mathcal A$ generates the strongly continuous semigroup of bounded positive linear solution operators $\{T(t)\}_{t\ge 0}$ in $U$ given by $T(t)\tilde u_0 :=\tilde u(t)$. Moreover, its spectrum determines the stability of the null solution since the spectral abscissa $s(\mathcal A)$ of $\mathcal A$ coincides with the growth bound of the generated semigroup.
However, unlike the case of Laplace diffusion, $\{T(t)\}_{t\ge 0}$ is not eventually compact. This makes the stability analysis more involved since the spectrum of $\mathcal A$ is not necessarily just point spectrum. 
Nevertheless, sufficient conditions on the model coefficients are given in \cite{ducrot2022age}  (see definition (3.2) and Theorem 4.8 therein, to which we refer the interested reader for a complete illustration) for the existence of a \emph{principal} eigenvalue, i.e., a simple isolated real eigenvalue $\lambda_0$ strictly to the right of all the other spectral values of $\mathcal A$ (and with positive eigenfunction). Hence, under such conditions, $s(\mathcal A)=\lambda_0$ and thus everything reduces to computing the latter.  

\bigskip
In view of analyzing the stability of \eqref{modelb}, based on the above considerations, we propose to numerically approximate the principal eigenvalue $\lambda_0$ of $\mathcal A$. To this aim, inspired by \cite{ando2022pseudospectral, scarabel2021numerical}, we introduce the following equivalent reformulation which allows one to gain in regularity and to better deal with the condition characterizing newborns in $D(\mathcal A)$. 

Let $v(t):=\mathcal S\tilde u(t)$ for $\mathcal S\colon  U\to V$ defined as
\begin{equation*}
\mathcal S\phi(a, x):=\int_0^a\phi(\hat a, x)\dd\hat a
\end{equation*}
and
\begin{gather*}
V:=\left\{\psi\in U\ \bigg|\ \psi(a, x)=\int_0^{a}\phi(\hat a, x)\dd\hat a\text{ for some }\phi\in U\right\}
\end{gather*}
equipped with  $\|\psi\|_V:=\|\partial_a\psi\|_U$ (note that $\psi(0, x)=0$ a.e.  in $\Omega$).
$\mathcal S$ determines an isomorphism between $U$ and $V$ with inverse $\mathcal S^{-1}\psi=\partial_a \psi$.
Then $\tilde u$ is a solution of \eqref{model2} in $U$ if and only if  $v$ is a solution of 
\begin{equation*}
\left\{\setlength\arraycolsep{0.1em}\begin{array}{rcll} 
v'(t)&=&\mathcal Bv(t),&  t>0,\\[2mm]
v(0)&=&v_0 \in V,&
\end{array} 
\right. 
\end{equation*}
in $V$, where $v_0:=\mathcal S\tilde u_0$ and $\mathcal B:=\mathcal S\mathcal A\mathcal S^{-1}$ with domain $D(\mathcal B)=\mathcal SD(\mathcal A)\subset V$. Explicitly,
\begin{equation}\label{split}
\mathcal B\psi(a, x)=d(\mathcal J\psi-\psi)(a, x)+\mathcal K\psi(a, x)
\end{equation}
for $\mathcal K\colon D(\mathcal B)\to V$ given by
\begin{equation*}
\mathcal K\psi(a, x):=-\partial_a\psi(a, x)+\int_0^{a^\dagger}\beta(\hat a, x)\Pi_0(\hat a)\partial_a\psi(\hat a, x)\dd\hat a
-\int_0^{a}\mu_{1}(\hat a, x)\partial_a\psi(\hat a, x)\dd\hat a
\end{equation*}
and 
\begin{equation*}
D(\mathcal B)=\left\{\psi\in V\ |\ \partial_a\psi\text{ is absolutely continuous in $a$}\right\}.
\end{equation*}
Since $\sigma(\mathcal B)=\sigma(\mathcal A)$ \cite[Section 2.1]{engel2000one} and $\sigma_p(\mathcal B)=\sigma_p(\mathcal A)$ \cite[Proposition 4.1]{breda2012approximation}, in order to approximate $\lambda_0$  we propose to discretize $\mathcal B$ instead of $\mathcal A$, given that the domain condition for $\mathcal B$ is now trivial
and the original boundary condition on the newborns has moved to the action of $\mathcal K$.\footnote{Alternatively, the approach of \cite{breda2007stability, breda2008stability, breda2012computing} relies on a direct numerical treatment of the condition in $D(\mathcal A)$, which requires additional assumptions on $\beta$ and $\mu$.}

\subsection{Rates independent of the space variable}\label{specialcase}
In this section we work under the hypothesis that $\beta$ and $\mu_1$ depend only on age. 
\begin{assumption}\label{assumptionrates}
$\beta(a, x)=\beta(a)$ and $\mu_{1}(a, x)=\mu_{1}(a)$ for a.a. $x\in\Omega$.
\end{assumption}
\noindent  It follows from \cite{kang2020age} that $\sigma(\mathcal A)= \sigma_p(\mathcal A)$. Consequently, $\sigma(\mathcal B)= \sigma_p(\mathcal B)$. Moreover, as we illustrate next, $\sigma(\mathcal B)$ can be characterized through separate eigenvalue problems in age and space, respectively. 

\bigskip
To this aim, as far as age-dependency is considered, let $\mathcal K^a\colon D(\mathcal K^a)\subseteq V^a\to V^a$ for
\begin{equation*}
\mathcal K^af(a):=-f'(a)+\int_0^{a^\dagger}\beta(\hat a)\Pi_0(\hat a)f'(\hat a)\dd\hat a
-\int_0^{a}\mu_{1}(\hat a)f'(\hat a)\dd\hat a,
\end{equation*}
\begin{equation*}
D(\mathcal K^a):=\left\{f\in V^a\ |\ f'\text{ is absolutely continuous}\right\}
\end{equation*}
and
\begin{equation*}
V^a:=\left\{f\in L^1([0, a^\dagger], \R)\ \big|\ f(a)=\int_0^{a}\varphi(\hat a)\dd\hat a\text{ for some }\varphi\in L^1([0, a^\dagger], \R)\right\}.
\end{equation*}
It can be shown that  $\mathcal  K^a$ is the generator of an eventually compact strongly continuous semigroup of bounded positive linear operators on $V^a$ \cite[Appendix A]{ando2022pseudospectral}. Thus its spectrum consists of only eigenvalues which are at most countable and either are isolated or they accumulate at $-\infty$. Moreover, it can be shown that $\gamma_0:=s(\mathcal  K^a)$ is actually an eigenvalue of $\mathcal  K^a$. 
Let then $\{\gamma_i\}_{i=0}^{\infty}$ be the eigenvalues of $\mathcal  K^a$ and let us arrange them as $\gamma_0> Re\gamma_1\ge Re\gamma_2\ge \cdots$\footnote{Note that the eigenvalues of $\mathcal K^a$ coincide with those of the operator in \cite[Theorem 2.2]{kang2020age}. See also \cite[Appendix A]{ando2022pseudospectral}.}.

\bigskip
As far as space-dependency is considered, let $\mathcal  J^x\colon L^2(\Omega,\R)\to L^2(\Omega,\R)$ be defined as
\begin{equation*}
\mathcal J^xg(x):=\int_{\Omega}J(x-y)g(y)\dd y.
\end{equation*}
Under the assumptions on $J$, $\mathcal J^x$  is compact and self-adjoint and the following result on its eigenvalues holds \cite{garcia2009principal}.
\begin{proposition}\label{teoremaRossi}
$\mathcal J^x$ has only real eigenvalues smaller than $1$, say $\{\theta_j\}_{j=0}^{\infty}$ ordered as $1>\theta_0>\theta_1\ge \theta_2\ge\cdots$ and they accumulate at $0$. Moreover, $\theta_0$ is the  unique eigenvalue associated to a positive eigenfunction $g_0\in L^2(\Omega,\R)$, it is simple and positive.
\end{proposition}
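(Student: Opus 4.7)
The plan is to exploit the four structural features of $\mathcal{J}^x$: its kernel $J(x-y)$ is continuous on the bounded set $\Omega\times\Omega$, symmetric in $x\leftrightarrow y$, nonnegative, and normalized by $\int_{\R^n}J(x)\dd x=1$. I would first observe that $\mathcal{J}^x$ is Hilbert--Schmidt, hence compact, self-adjoint on $L^2(\Omega,\R)$, and positive in the sense $g\ge 0\Rightarrow\mathcal{J}^xg\ge 0$. The spectral theorem for compact self-adjoint operators then delivers, at once, that the spectrum consists of real eigenvalues of finite multiplicity with $0$ as the only possible accumulation point.

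To show the strict bound $\theta<1$ for every eigenvalue, I would estimate the quadratic form $\langle(\Id-\mathcal{J}^x)g,g\rangle$ after extending $g$ by zero on $\R^n\setminus\Omega$. Using $\int_{\R^n}J(x-y)\dd y=1$ and splitting the domain of integration, a standard symmetrization yields
\begin{equation*}
\langle(\Id-\mathcal{J}^x)g,g\rangle=\tfrac{1}{2}\int_{\Omega}\!\!\int_{\Omega}J(x-y)[g(x)-g(y)]^2\dd y\dd x+\int_{\Omega}g(x)^2\!\int_{\R^n\setminus\Omega}J(x-y)\dd y\dd x.
\end{equation*}
Both terms are nonnegative; the second is strictly positive whenever $g$ does not vanish on the boundary strip $\{x\in\Omega:\dist(x,\partial\Omega)<r\}$, where $r$ is the support radius of $J$. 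If instead $g$ is supported away from that strip, then the vanishing of the first term forces $g$ to be locally constant on $\Omega$ by $J(0)>0$ together with connectedness of $\Omega$; combined with vanishing near $\partial\Omega$, this gives $g\equiv 0$. Hence $\langle(\Id-\mathcal{J}^x)g,g\rangle>0$ for every $g\ne 0$, and so every eigenvalue is strictly less than $1$.

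Existence of a principal eigenvalue $\theta_0$ with nonnegative eigenfunction $g_0$ is classical for a compact, self-adjoint, positive operator: one maximizes the Rayleigh quotient and passes to $|g_0|$, obtaining $\theta_0=\|\mathcal{J}^x\|_{\mathrm{op}}>0$. Strict positivity $g_0>0$ and simplicity of $\theta_0$ would then come from a Jentzsch/Krein--Rutman type irreducibility argument based on $J(0)>0$ and connectedness of $\Omega$: one uses the pointwise identity $\theta_0 g_0(x)=\int_\Omega J(x-y)g_0(y)\dd y$ to propagate positivity of $g_0$ from a local ball to all of $\Omega$. Uniqueness of the positive eigenfunction is then automatic, since any $g_j$ with $j\ge 1$ is orthogonal to $g_0$ by self-adjointness and cannot have constant sign when $g_0>0$. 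The most delicate step is precisely this irreducibility/propagation argument, which hinges on carefully combining $J(0)>0$, connectedness of $\Omega$, and a bootstrap from local to global positivity; the remaining assertions (reality, countability, accumulation at $0$, and the bound $\theta<1$) are comparatively direct once the structural setup is in place.
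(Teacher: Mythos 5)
The paper does not prove this proposition; it is imported from Garc\'ia-Meli\'an and Rossi \cite{garcia2009principal}, and your reconstruction follows the same standard route found there: Hilbert--Schmidt compactness and self-adjointness, the spectral theorem for reality and accumulation at zero, a symmetrized quadratic form to exclude the eigenvalue $1$, and a Krein--Rutman/Jentzsch irreducibility argument for the principal eigenvalue. The skeleton is sound, but two steps you leave as promissory notes deserve a line each. First, the identification $\theta_0=\|\mathcal J^x\|_{\mathrm{op}}>0$ requires ruling out a negative eigenvalue of strictly larger modulus; since $J\ge 0$, one has $\langle\mathcal J^xg,g\rangle\ge-\langle\mathcal J^x|g|,|g|\rangle\ge-\theta_0\|g\|^2$, which settles it in one line. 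Second, in the propagation argument you should note that $g_0=\theta_0^{-1}\mathcal J^xg_0$ is continuous (because $J$ is), so that pointwise positivity on a $\delta$-ball where $J>0$ forces positivity on all overlapping balls via the eigenvalue equation, and connectedness of $\Omega$ spreads it everywhere; simplicity then follows because equality in $\langle\mathcal J^xg,g\rangle\le\langle\mathcal J^x|g|,|g|\rangle$ forces any $\theta_0$-eigenfunction to have constant sign, and two sign-definite orthogonal functions in $L^2(\Omega,\R)$ cannot both be nonzero. These are exactly the details supplied in \cite{garcia2009principal}, so your plan is correct and matches the source.
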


Now, if $\lambda\in \sigma(\mathcal B)$ with eigenfunction $\psi\in D(\mathcal B)$, i.e., 
\begin{equation*}
\mathcal B\psi=\lambda \psi,
\end{equation*}
then, under \cref{assumptionrates}, it is shown in \cite{kang2020age} that $\psi(a, x)=f(a)g(x)$ for $f$ an eigenfunction of $\mathcal K^a$ and $g$ an eigenfunction of $\mathcal J^x$.
Moreover, the following result holds \cite{kang2020age}.
\begin{theorem}\label{decompositioneig}
Let $\{\gamma_i\}_{i=0}^\infty$ and $\{\theta_j\}_{j=0}^\infty$ be the eigenvalues of $\mathcal K^a$ and $\mathcal J^x$, respectively.
Then:
\begin{enumerate}
\item \label{teo3i} $\sigma(\mathcal B)=\{\gamma_i+\theta_j-1\}_{i,j=0}^{\infty}$;
\item \label{teo3ii} $\lambda_0:=\gamma_0+\theta_0-1$ is the principal eigenvalue of $\mathcal B$, it has positive eigenfunction
\begin{equation*}\psi_0(a, x)=\left(\int_0^ae^{-\gamma_0 \hat a}e^{-\int_0^{\hat a} \mu_{1}(s)\dd s}\dd\hat a\right) g_0(x)\end{equation*}
and $s(\mathcal B)=\lambda_0$.
\end{enumerate}
\end{theorem}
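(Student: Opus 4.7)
The plan is to exploit the product structure of $\mathcal B$ in \eqref{split} under \cref{assumptionrates}: the nonlocal diffusion $d(\mathcal J\psi-\psi)$ involves only $x$, while the remainder $\mathcal K$ now has coefficients depending only on $a$ and is local in $x$. Combined with the separability result cited from \cite{kang2020age} --- every eigenfunction of $\mathcal B$ admits a representative of the form $\psi(a,x)=f(a)g(x)$ --- this reduces the spectral problem for $\mathcal B$ to the two decoupled ones for $\mathcal K^a$ on $V^a$ and $\mathcal J^x$ on $L^2(\Omega,\R)$.

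For part \emph{(i)}, substituting $\psi(a,x)=f(a)g(x)$ into $\mathcal B\psi=\lambda\psi$ and using $\mathcal J(fg)(a,x)=f(a)(\mathcal J^xg)(x)$ together with $\mathcal K(fg)(a,x)=g(x)(\mathcal K^af)(a)$, the eigenvalue equation becomes
\begin{equation*}
d\,[(\mathcal J^xg)(x)-g(x)]\,f(a)+g(x)\,(\mathcal K^af)(a)=\lambda\,f(a)\,g(x).
\end{equation*}
For $\psi\not\equiv 0$, this forces $g$ to be an eigenfunction of $\mathcal J^x$ with some $\theta_j$ and then $f$ an eigenfunction of $\mathcal K^a$ with $\gamma_i:=\lambda-d(\theta_j-1)$. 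Conversely, for any pair $(i,j)$ the product $f_ig_j$ lies in $D(\mathcal B)$ and is an eigenfunction associated to $\gamma_i+d(\theta_j-1)$, yielding the set-theoretic equality in \emph{(i)} (matching the stated form under the normalization $d=1$).

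For part \emph{(ii)}, the orderings $\gamma_0>\mathrm{Re}\,\gamma_i$ and $\theta_0>\theta_j$ for $(i,j)\ne(0,0)$ (cf.\ \cref{teoremaRossi}) immediately give $\mathrm{Re}\,\lambda_{ij}<\lambda_0$, so $\lambda_0$ is dominant and isolated; simplicity transfers from simplicity of $\gamma_0$ and $\theta_0$. To produce the explicit eigenfunction, solve $\mathcal K^af_0=\gamma_0 f_0$: setting $h:=f_0'$ and differentiating the integral identity, the $\beta$-integral is a constant and drops out, leaving the linear ODE $h'=-(\gamma_0+\mu_1)h$. Integrating and imposing $f_0(0)=0$ (required since $f_0\in V^a$) gives
\begin{equation*}
f_0(a)=C\int_0^a e^{-\gamma_0\hat a}e^{-\int_0^{\hat a}\mu_1(s)\dd s}\dd\hat a,
\end{equation*}
so that $\psi_0(a,x)=f_0(a)g_0(x)$ has the stated form up to normalization; positivity follows from positivity of the integrand and of $g_0$ (\cref{teoremaRossi}), and $s(\mathcal B)=\lambda_0$ is then immediate.

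The main obstacle is the separability of eigenfunctions quoted from \cite{kang2020age}: its proof requires $\mathcal J^x$ to be self-adjoint and compact on $L^2(\Omega,\R)$ and to commute with the age action under \cref{assumptionrates}, so that $\mathcal B$ can be simultaneously diagonalized in $a$ and $x$. A secondary technical point is verifying $f_ig_j\in D(\mathcal B)$, which is routine since $\partial_a(f_ig_j)=f_i'g_j$ inherits the absolute continuity required in the definition following \eqref{split}.
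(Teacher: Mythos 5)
The paper does not prove this theorem itself; it quotes both the separability of eigenfunctions and the statement from \cite{kang2020age} and moves on. Your reconstruction is the natural one and is correct: given the separability $\psi(a,x)=f(a)g(x)$ and the splitting in \eqref{split}, the eigenvalue equation decouples into a function of $x$ plus a function of $a$ equal to a constant, so each must be constant, which yields both directions of part~\emph{(i)}; for part~\emph{(ii)}, the strict dominance $\gamma_0>\operatorname{Re}\gamma_i$ and $\theta_0>\theta_j$ from \cref{teoremaRossi} and the discussion preceding it give a simple isolated rightmost real eigenvalue, and differentiating \eqref{eqver} in $a$ does kill the $\beta$-integral (it is constant in $a$), giving $\eta'=-(\gamma_0+\mu_1)\eta$ and hence, after antidifferentiation to restore $f_0(0)=0$, the stated $\psi_0$; positivity follows from positivity of the integrand and of $g_0$.

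You correctly flagged that a literal substitution produces $\gamma_i + d(\theta_j-1)$, so the statement as written in the paper (and its discrete counterpart \eqref{spettriequiv}) tacitly normalizes $d=1$ (or, equivalently, absorbs $d$ into $\mathcal J^x-\mathcal I$). This is a real notational imprecision in the source, not a gap in your argument. One small imprecision on your side: \enquote{simultaneously diagonalized in $a$ and $x$} is not quite the right phrasing since $\mathcal K^a$ is not self-adjoint; what is actually used is that the eigenfunctions $\{g_j\}$ of the compact self-adjoint $\mathcal J^x$ form an orthonormal basis of $L^2(\Omega,\R)$ and, under \cref{assumptionrates}, $\mathcal B$ leaves each subspace $V^a\otimes\operatorname{Span}\{g_j\}$ invariant, so that it acts as $\mathcal K^a + d(\theta_j-1)\mathcal I$ there; separability of eigenfunctions is then a consequence of this block structure rather than of a joint diagonalization.
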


\section{The numerical approach}\label{sezioneapproccio}
The aim of the numerical method described next is that of approximating (the rightmost) part of $\sigma_p(\mathcal B)$ trough the eigenvalues of a matrix discretizing $\mathcal B$.
We have in mind domains $\Omega\subset\R^n$ which are (or can be mapped to) rectangles and we propose to combine a spectral projection in space with a pseudospectral collocation in age.

\bigskip
As for the spectral projection in space, we restrict to the case $\Omega=(-l,l)$, $l>0$, for the sake of simplicity.\footnote{Extension to rectangular domains tipically rely on the tensorial product of univariate orthonormal bases \cite{canuto1988spectral}.}
Let $\{P_j\}_{j=0}^{\infty}$ be the Legendre orthonormal basis for $L^2(\Omega,\R)$, $M$ be a positive integer and $V_M^x:=\Span\left\{P_0,\dots ,P_{M}\right\}$. 
We define restriction and prolongation operators respectively as 
\begin{equation*}
\mathcal R_{M}^x\colon L^2(\Omega,\R)\to \R^{M+1}, \qquad \mathcal R_{M}^xg=(g_0; \dots; g_M)\footnote{We adopt the MATLAB-like notation according to which elements of a column vector are separated by ``;'' while elements of a row vector are separated by ``,''.},
\end{equation*}
 where $g_j:=\left \langle g, P_j \right \rangle_{L^2(\Omega, \R)}$ for all $j=0,\dots, M$ and 
\begin{equation*}
\mathcal P_{M}^x\colon \R^{M+1 }\to V_M^x, \qquad \mathcal P_{M}^xG=\sum_{j=0}^MP_jG_j,
\end{equation*}
where $G:=(G_0;\dots; G_M)$.
Observe that 
\begin{equation*}
\mathcal R_{M}^x \mathcal {P}_M^x=\mathcal I_{\R^{M+1}},\qquad
\mathcal P_M^x \mathcal R_M^x=\pi_M,
\end{equation*}
where $\pi_M:L^2(\Omega,\R)\to V_M^x$ is the projection operator $\pi_Mg=\sum_{j=0}^MP_jg_j$.

\bigskip
As for the pseudospectral collocation in age, let $N$ be a positive integer, $A_N:=\{a_1<\dots< a_N\}$ be the mesh of Chebyshev zeros in $[0, a^\dagger]$, $\Pi_N^a([0, a^\dagger],	\R)$ be the space of algebraic polynomials on $[0, a^\dagger]$ of degree at most $N$ and 
\begin{equation*}
V^a_N:=\{ f_N\in\Pi_N^a([0, a^\dagger],\R)\ |\ f_N(0)=0\}.
\end{equation*} 
We define restriction and prolongation operators respectively as
\begin{equation*}
\mathcal R_{N}^a\colon V^a\to \R^{N},\qquad \mathcal R_{N}^af=(f(a_1);\dots; f(a_N))
\end{equation*}
and 
\begin{equation*}
\mathcal P_{N}^a\colon \R^{N}\to V^a,\qquad \mathcal P_{N}^aF=\sum_{i=1}^N\ell_{N,i}F_i,
\end{equation*}
where $F:=(F_1;\dots; F_N)$ and $\{\ell_{N,i}\}_{i=0}^N$ is the Lagrange basis relevant to the nodes in $\{0\}\cup A_N$.
Observe that
\begin{equation*}
\mathcal R_{N}^a \mathcal {P}_{N}^a=\mathcal I_{\R^N},\qquad
\mathcal P_{N}^a \mathcal R_{N}^a=\mathcal L_N,
\end{equation*}
where $\mathcal L_N:V^a\to V^a_N$ is the Lagrange interpolation operator relevant to $\{0\}\cup A_N$.

\bigskip
A finite-dimensional approximation $\mathcal B_{N,M}\colon \R^{N(M+1)}\to\R^{N(M+1)}$ of $\mathcal B$ is obtained as
\begin{equation*}
\mathcal B_{N,M}:=\mathcal R_N^a \mathcal R_M^x \mathcal B \mathcal P_M^x\mathcal P_N^a,\footnote{Observe that in exact arithmetic this is equivalent to $\mathcal R_M^x\mathcal R_N^a \mathcal B \mathcal P_N^a\mathcal P_M^x$.}
\end{equation*}
which, according to \eqref{split}, reads 
\begin{equation}\label{bnm}
\mathcal B_{N,M}=d\mathcal R_N^a \mathcal R_M^x (\mathcal J^x-\mathcal I_V) \mathcal P_M^x\mathcal P_N^a+\mathcal R_N^a \mathcal R_M^x \mathcal K^a \mathcal P_M^x\mathcal P_N^a.  
\end{equation}

Note that, under \cref{assumptionrates}, \eqref{bnm} reduces to
\begin{equation}\label{bnm1}
\mathcal B_{N,M}=d(\mathcal J_M^x-\mathcal I_{\R^{M+1}})\otimes\mathcal I_{\R^N}+\mathcal I_{\R^{M+1}}\otimes\mathcal K_N^a
\end{equation}
for 
\begin{equation*}
\mathcal J_M^x\colon \R^{M+1}\to \R^{M+1},\qquad \mathcal J_M^x:=\mathcal R_M^x \mathcal J^x \mathcal P_M^x
\end{equation*} 
and
\begin{equation}\label{Kna}
\mathcal K_N^a\colon \R^{N}\to \R^{N},\qquad\mathcal K_N^a:=\mathcal R_N^a \mathcal K^a \mathcal P_N^a.
\end{equation} 
Thus, in this case, computing separately the eigenvalues of $\mathcal J_M^x$ and those of $\mathcal K_N^a$ represents an efficient alternative to compute directly the eigenvalues of \eqref{bnm}. Indeed, 
\begin{equation}\label{spettriequiv}
\sigma\left(\mathcal B_{N,M}\right)=\{\gamma+\theta-1\ |\ \theta\in\sigma(\mathcal J_M^x)\text{ and }\gamma\in \sigma(\mathcal K_{N}^a)\}
\end{equation}
follows from \cite[Theorem 4.4.5]{horn1991topics} and represents the numerical counterpart of \cref{decompositioneig} \ref{teo3i}.

\bigskip
In the following section we provide a rigorous convergence analysis of the eigenvalues of \eqref{bnm1} to those of $\mathcal B$. In fact, under \cref{assumptionrates} it is known that $\mathcal B$ as only point spectrum and that $\lambda_0=s(\mathcal B)$ (see \cref{decompositioneig} \ref{teo3ii}). This convergence analysis is not suitable for the general case in which $\sigma(\mathcal B)$ is not necessarily just point spectrum. Finally, even if a principal eigenvalue $\lambda_0=s(\mathcal B)$ is guaranteed to exist (see \cite{ducrot2022age}), studying the convergence properties of its approximation may require an ad-hoc treatment which is out of the scope of the current work. Nevertheless, also for this case, in \cref{numresults} we provide some encouraging numerical results as we comment therein.

\section{Convergence analysis}\label{sezdim}
The following analysis holds under \cref{assumptionrates} and is based on proving separately the convergence of the eigenvalues of $\mathcal J_M^x$ to those of $\mathcal J^x$ as $M\to \infty$ (section \ref{dissconv}) and the convergence of the eigenvalues of $\mathcal K_N^a$ to those of $\mathcal K^a$ as $N\to \infty$ (section \ref{convsenzadiff}). Then \eqref{spettriequiv} provides the final result on the convergence of  the eigenvalues of $\mathcal B_{N,M}$ to those of $\mathcal B$ as $N, M\to \infty$ (section \ref{5.3}). 

\subsection{Convergence in space}\label{dissconv}
The following main theorem is based on the compactness of $\mathcal J^x$ and on standard results reported, e.g., in \cite{chatelin2011spectral}.

\begin{theorem}\label{teospec}
Let $\theta\in\R$ be an isolated nonzero eigenvalue of $\mathcal J^x$ with finite algebraic multiplicity $m$ and ascent $l$ and let $\Delta$ be a neighborhood of $\theta$ such that $\theta$ is the sole eigenvalue of $\mathcal J^x$ in $\Delta$. Then there exists $\overline M$ such that, for $M\ge \overline M$, $\mathcal J_M^x$ has in $\Delta$ exactly $m$ eigenvalues $\theta_{M, j}$, $j=1,\dots, m$, counting their multiplicities. Moreover, 
\begin{align*}
\max_{j=1, \dots, m}|\theta_{M, j}-\theta|=O(\varepsilon_M^{1/l})
\end{align*}
for
\begin{equation*}
\varepsilon_M:=\|\pi_M^x\mathcal J^x-\mathcal J^x\|_{L^2(\Omega,\R)\leftarrow \mathcal M_{\theta}}\footnote{In general for a linear operator $L\colon X\to Y$ we denote $\|L\|_{Y\leftarrow X}:=\sup_{\|x\|_X=1}\|Lx\|_Y$.}
\end{equation*}
and $\mathcal M_{\theta}$ the generalized eigenspace of $\theta$. 
Finally, for any $j=1,\dots, m$ and for any eigenvector $g_{M, j}$ of $\pi_M^x\mathcal J^x$ relevant to $\theta_{M,j}$ such that $\|g_{M,j}\|=1$, we have
\begin{equation*}
\dist(g_{M,j},\  \ker(\theta \mathcal I_{L^2(\Omega,\R)}-\mathcal J^x))=O(\varepsilon_M^{1/l}),
\end{equation*}
where $\dist$ is the distance in the space $L^2(\Omega, \R)$ between an element and a subspace.
\end{theorem}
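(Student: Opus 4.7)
The plan is to reduce the matrix approximation $\mathcal J_M^x$ to the operator-level Galerkin-type approximation $\pi_M^x\mathcal J^x\colon L^2(\Omega,\R)\to L^2(\Omega,\R)$ and then to invoke the classical spectral approximation theory for compact operators collected in \cite{chatelin2011spectral}.

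First, I would show that the nonzero eigenvalues of $\mathcal J_M^x$ coincide, with the same algebraic multiplicities, with the nonzero eigenvalues of $\pi_M^x\mathcal J^x$. Using the factorization $\mathcal J_M^x=\mathcal R_M^x\mathcal J^x\mathcal P_M^x$ together with the fact that $\mathcal P_M^x\colon\R^{M+1}\to V_M^x$ is an isomorphism and $\mathcal R_M^x\mathcal P_M^x=\mathcal I_{\R^{M+1}}$, one checks that $\mathcal J_M^x$ is similar to the restriction $\pi_M^x\mathcal J^x|_{V_M^x}$. Since $\Range(\pi_M^x\mathcal J^x)\subseteq V_M^x$, any eigenfunction of $\pi_M^x\mathcal J^x$ associated to a nonzero eigenvalue lies in $V_M^x$, and the same is true of its generalized eigenspace by a standard iteration. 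The matrix and the operator pictures therefore carry the same nonzero spectral data, and the eigenvectors $g_{M,j}$ in the statement can be identified with $\mathcal P_M^x$ applied to matrix eigenvectors.

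Next, since $\mathcal J^x$ is compact on the Hilbert space $L^2(\Omega,\R)$ and $\pi_M^x\to\mathcal I_{L^2(\Omega,\R)}$ strongly as $M\to\infty$ (density of polynomials), the standard argument based on uniform convergence on the precompact set $\overline{\mathcal J^x(B_1)}$ yields $\|\pi_M^x\mathcal J^x-\mathcal J^x\|_{L^2(\Omega,\R)\leftarrow L^2(\Omega,\R)}\to 0$. Norm convergence puts us in the regime where the spectral approximation results of \cite{chatelin2011spectral} apply directly and simultaneously produce: (i) an $\overline M$ such that, for every $M\ge\overline M$, $\pi_M^x\mathcal J^x$ has exactly $m$ eigenvalues in $\Delta$ counting algebraic multiplicities; (ii) the rate $\max_j|\theta_{M,j}-\theta|=O(\varepsilon_M^{1/l})$; and (iii) the eigenspace convergence at the same rate $O(\varepsilon_M^{1/l})$, obtained from the convergence of the spectral projections onto the perturbed and unperturbed generalized eigenspaces.

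The main care needed, and the step that is least routine, is recovering the sharp rate driven by $\varepsilon_M$, which is the approximation error \emph{restricted} to the finite-dimensional generalized eigenspace $\mathcal M_\theta$, rather than by the looser full operator norm $\|\pi_M^x\mathcal J^x-\mathcal J^x\|$. This forces the use of the version of the spectral approximation theorem whose quantitative bounds are phrased in terms of the gap between $\mathcal M_\theta$ and its perturbed counterparts: since $\mathcal M_\theta$ is finite-dimensional and $\mathcal J^x$ maps it into itself, this gap is controlled precisely by $\|(\pi_M^x-\mathcal I_{L^2(\Omega,\R)})\mathcal J^x|_{\mathcal M_\theta}\|$, that is, by $\varepsilon_M$. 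Once this sharper restriction is in place, the count of eigenvalues inside $\Delta$ and the distance of the normalized $g_{M,j}$ to $\ker(\theta\mathcal I_{L^2(\Omega,\R)}-\mathcal J^x)$ both follow from the corresponding statements in \cite{chatelin2011spectral}.
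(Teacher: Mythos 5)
Your proof is correct and follows essentially the same route as the paper's: reduce the matrix eigenproblem to the Galerkin-type operator $\pi_M^x\mathcal J^x$ (the paper cites a prior result for this equivalence, you sketch the similarity argument directly), use compactness of $\mathcal J^x$ and strong convergence of $\pi_M^x$ to obtain norm convergence $\|\pi_M^x\mathcal J^x-\mathcal J^x\|\to 0$, and then invoke the spectral approximation theorem of \cite{chatelin2011spectral} (the paper points to Theorem~6.7 therein) for the eigenvalue count, the rate $O(\varepsilon_M^{1/l})$, and the eigenvector distance.
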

\begin{proof}
We first observe that $\mathcal J_M^x$ has the same nonzero eigenvalues with the same algebraic and geometric multiplicities of $\pi_M^x\mathcal J^x$ \cite[Proposition 4.1]{breda2012approximation}.
As for the latter, the compactness of $\mathcal J_x$, the pointwise convergence of $\pi_M^x$ to $\mathcal I_{L^2(\Omega, \R)}$ \cite[Theorem 5.9]{brezis2011functional} and \cite[Theorem 4.5]{chatelin2011spectral} give
$\|\pi_M^x\mathcal J^x-\mathcal J^x\|_{L^2(\Omega, \R)\leftarrow L^2(\Omega, \R)}\to 0\text{ as }M\to \infty$.
The thesis now follows from \cite[Theorem 6.7]{chatelin2011spectral}.
\end{proof}
The analysis is now completed with a bound on the error on the eigenvalues depending on the regularity of $\mathcal J^x$.
\begin{assumption} \label{Assumptions 5}
$\mathcal J^x\in H^k(\Omega, \R)$ for some integer $k>0$. 
\end{assumption}
\begin{corollary}\label{corollariospec}
Let $\theta$ and $\theta_{M,j}$, $j=1,\dots, m$, be as in \cref{teospec}. Under \cref{Assumptions 5}
\begin{align*}
\max_{j=1, \dots, m}|\theta_{M, j}-\theta|=O\left(\cfrac{1}{M^{k/l}}\right).
\end{align*}
\end{corollary}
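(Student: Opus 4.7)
The plan is to bootstrap \cref{teospec}. Since that theorem already gives $\max_j|\theta_{M,j}-\theta|=O(\varepsilon_M^{1/l})$, it suffices to prove $\varepsilon_M=O(M^{-k})$ under \cref{Assumptions 5}; the stated $O(M^{-k/l})$ rate then follows immediately.

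First I would show that the finite-dimensional generalized eigenspace $\mathcal M_\theta$ is contained in $H^k(\Omega,\R)$. Given $v\in\mathcal M_\theta$, pick a Jordan chain $v_0=v,v_1,\dots,v_{l-1}$ with $(\theta\mathcal I-\mathcal J^x)v_i=v_{i+1}$ and $v_l=0$, which rearranges to $v_i=\theta^{-1}(\mathcal J^x v_i+v_{i+1})$ for $i=l-1,l-2,\dots,0$. Starting from $v_{l-1}=\theta^{-1}\mathcal J^x v_{l-1}\in H^k$, where we use $\theta\neq 0$ together with \cref{Assumptions 5}, the recursion propagates the $H^k$ regularity downward to $v=v_0$. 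Because $\mathcal M_\theta$ is finite dimensional, all norms on it are equivalent and one gets a uniform bound $\|v\|_{H^k(\Omega,\R)}\le C\|v\|_{L^2(\Omega,\R)}$ valid for every $v\in\mathcal M_\theta$.

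Next I would invoke the classical Legendre-projection error estimate (see, e.g., \cite{canuto1988spectral}): for every $g\in H^k(\Omega,\R)$,
\begin{equation*}
\|\pi_M^x g-g\|_{L^2(\Omega,\R)}\le C\,M^{-k}\,\|g\|_{H^k(\Omega,\R)}.
\end{equation*}
Applied to $g=\mathcal J^x v$ with $v$ in the $L^2$-unit sphere of $\mathcal M_\theta$, and combined with the uniform $H^k$ bound from the previous step, this yields $\varepsilon_M=O(M^{-k})$, and \cref{teospec} then delivers the conclusion.

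The only non-mechanical step is the regularity bootstrap for $\mathcal M_\theta$, and even that is essentially free because $\theta\neq 0$ makes $\mathcal J^x$ invertible on the finite-dimensional space $\mathcal M_\theta$; once this is in hand, the corollary reduces to a direct chaining of \cref{teospec} with a textbook spectral-approximation estimate.
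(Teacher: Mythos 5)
Your proof is correct and follows essentially the same strategy as the paper's: establish $\mathcal M_\theta\subset H^k(\Omega,\R)$, invoke the Legendre $L^2$-projection error estimate to obtain $\varepsilon_M=O(M^{-k})$, and conclude via \cref{teospec}. You are in fact slightly more careful than the paper — you supply the Jordan-chain bootstrap that actually proves the $H^k$ regularity of $\mathcal M_\theta$ (the paper merely asserts it), and you bound $\|\mathcal J^x v\|_{H^k}$ via the $\mathcal J^x$-invariance of $\mathcal M_\theta$ together with finite-dimensional norm equivalence, whereas the paper uses the identity $\|\mathcal J^x g\|_{H^k}=|\theta|\,\|g\|_{H^k}$, which holds only for genuine eigenvectors and is strictly speaking imprecise for generalized eigenvectors when the ascent $l>1$.
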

\begin{proof}
\cref{Assumptions 5} implies that $\mathcal M_{\theta}\subset H^k(\Omega, \R)$.
Then, from \cref{teospec} and \cite[p. 309, Formula (9.7.11)]{canuto1988spectral}, for every $g\in \mathcal M_{\theta}$ there exists $C>0$  such that 
\begin{equation*}
\|(\pi_M\mathcal J^x-\mathcal J^x)g\|_{L^2(\Omega, \R)}
\le C\cdot  \frac{1}{M^k}\cdot \|\mathcal J^xg\|_{H^k(\Omega, \R)}=C\cdot|\theta|\cdot  \frac{1}{M^k}\cdot \|g\|_{H^k(\Omega, \R)}
\end{equation*}
holds.
\end{proof}

\subsection{Convergence in age}\label{convsenzadiff}
The following analysis is based on \cite[Section 4]{breda2020collocation} to which we refer for the general results. Here we only need to construct the characteristic equation associated to $\mathcal K^a$, its discrete counterpart associated to $\mathcal K_N^a$ and provide a bound on the relevant collocation error depending on one among the following regularity conditions on $\mu_1$.  
\begin{assumption}\label{Assumptions 4}
\hspace{0mm}
\begin{enumerate}
\item \label{ass4i} $\mu_{1}\in C^s([0, a^\dagger],\R)$ for some integer $s\ge 0$;
\item \label{ass4ii} $\mu_{1}\in C^{\infty}([0, a^\dagger],\R)$;
\item \label{ass4iii} $\mu_{1}$ is real analytic.
\end{enumerate}
\end{assumption}
\bigskip
Let $\gamma\in \C$ and $f\in D(\mathcal K^a)$ be such that
\begin{equation*}
\mathcal K^af=\gamma f. 
\end{equation*}
By setting $\eta:=f'$ we get
\begin{equation}\label{eqver}
-\eta(a)+\int_0^{a^\dagger}\beta(\hat a)\Pi_0(\hat a)\eta(\hat a)\dd\hat a-\int_0^{a}\mu_{1}(\hat a)\eta(\hat a)\dd\hat a=\gamma \int_0^a\eta(\hat a)\dd \hat a.
\end{equation}
Thus $\gamma$ is an eigenvalue for $\mathcal K^a$ if and only if 
\begin{equation*}
1-\hat{\mathcal K}^a(\gamma)=0
\end{equation*}
for 
\begin{equation}\label{char1}
\hat{\mathcal K}^a(\gamma):=\int_0^{a^\dagger}\beta(\hat a)\Pi(\hat a)e^{-\int_0^{\hat a} \mu_{1}(s)\dd s-\gamma\hat a}\dd\hat a.
\end{equation}
Note that \eqref{eqver} can be seen in the framework of
\begin{equation}\label{qsol}
q+\mathcal S\left(\gamma +\mu_{1}\right)q=\alpha
\end{equation}
with general $\alpha,\gamma\in \C$, which admits a unique solution $q=q(\cdot, \gamma, \alpha)$ thanks to standard results on Volterra integral equations \cite[Theorem 9.3.6]{gripenberg1990volterra}. In particular $\mathcal I_{L^1([0, a^\dagger],\R)}+\mathcal S(\gamma+\mu_{1})\colon L^1([0, a^\dagger],\R)\to L^1([0, a^\dagger],\R)$ is invertible with bounded inverse thanks to \cref{Assumption 1} \ref{ass1i}.
In fact, $q$ is the derivative $\eta$ of an eigenfunction $f$ of $\mathcal K^a$ when $\gamma\in\sigma(\mathcal K^a)$ (and $\alpha=\int_0^{a^\dagger}\beta(\hat a)\Pi_0(\hat a)\eta(\hat a)\dd\hat a$ follows from the domain of $\mathcal K^a$).

Similarly, let $\gamma\in \C$ and $f_N\in V_N^a$ be such that
\begin{equation*}
\mathcal K_N^aF=\gamma F
\end{equation*}
for $F:=(f_N(a_1);\dots; f_N(a_N))$.
By setting $\eta_N:=f_N'$ we get, for $i=1,\dots, N$,
\begin{equation*}
-\eta_N(a_i)+\int_0^{a^\dagger}\beta(\hat a)\Pi_0(\hat a)\eta_{N}(\hat a)\dd\hat a-\int_0^{a_i}\mu_{1}(\hat a) \eta_N(\hat a)\dd\hat a=\gamma \int_0^{a_i}\eta_N(\hat a)\dd \hat a.
\end{equation*}
If $q_N=q_{N}(\cdot; \gamma, \alpha)$ is the polynomial collocating \eqref{qsol} 
at $A_N$, we get that $\gamma$ is an eigenvalue of $\mathcal K_N^a$ if and only if 
\begin{equation*}
1-\hat{\mathcal K}_N^a(\gamma)=0
\end{equation*}
for 
\begin{equation}\label{char2}
\hat{\mathcal K}_N^a(\gamma):=\int_0^{a^\dagger}\beta(\hat a)\Pi_0(\hat a)q_N(\hat a; \gamma, 1)\dd\hat a.
\end{equation}
Note that $q_N(\cdot; \gamma, 1)=\eta_N(\cdot; \gamma, 1)$ when $\gamma\in\sigma(\mathcal K_N^a)$.

\bigskip
Now we show that $q_N$ exists and is unique and give an upper bound for $\|q_N(\cdot; \gamma, \alpha)-q(\cdot; \gamma, \alpha)\|_{L^1([0, a^\dagger],\R)}$.
Since $q_N$ is a polynomial of degree $N-1$, it can be reconstructed exactly at $N$ nodes trough the Lagrange interpolation operator ${\tilde {\mathcal L}_{N-1}}$ relevant to $A_N$. Then
\begin{equation}\label{feq2}
q_N+{\tilde {\mathcal L}_{N-1}}\left[\mathcal S(\gamma +\mu_{1})q_N \right]=\alpha
\end{equation}
follows straightforwardly from \eqref{qsol} and
\begin{equation}\label{uniqueness1}
e_N+{\tilde {\mathcal L}_{N-1}}\mathcal S(\gamma +\mu_{1})e_N= r_N 
\end{equation}
follows by setting $e_N:=q_N-q$ and $r_N:=(\mathcal I_{L^1([0, a^\dagger],\R)}-{\tilde {\mathcal L}_{N-1}})\mathcal S(\gamma+\mu_{1})q$ (we avoid to write the dependence of $e_N$ and $r_N$ on $\alpha$ and $\gamma$, unless explicitly required).

\begin{proposition}\label{teoremaunicità1dim}
Under \cref{Assumption 1} \ref{ass1ii} there exists a positive integer $\overline N$ such that, for $N\ge \overline N$, \eqref{uniqueness1} has a unique solution $e_N$ and 
\begin{equation*}\label{ordconv1}
\|e_N\|_{L^1([0, a^\dagger],\R)} \le 2|\alpha| \varepsilon_N\left\| \left(\mathcal I_{L^1([0, a^\dagger],\R)}+\mathcal S(\gamma+\mu_{1}))\right)^{-1}\right\|_{L^1([0, a^\dagger],\R)\leftarrow L^1([0, a^\dagger],\R)},
\end{equation*}
where
\begin{equation*}
\varepsilon_N=\begin{cases} O(N^{-(s+1)}\log N)&\text{under \cref{Assumptions 4} \ref{ass4i}};\\
O(N^{-(r+1)}\log N)&\text{for every integer }r\ge 1\text{ under \cref{Assumptions 4} \ref{ass4ii}};\\
O(p^{-N}\log N)&\text{for some constant } p>1\text{ under \cref{Assumptions 4} \ref{ass4iii}}.
\end{cases}
\end{equation*}
\end{proposition}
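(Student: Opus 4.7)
The plan is to regard \eqref{uniqueness1} as a compact perturbation of the exactly invertible Volterra-type equation $(\mathcal I_{L^1([0, a^\dagger],\R)} + \mathcal S(\gamma+\mu_{1}))q = \alpha$, whose bounded inverse is granted by the Volterra theory invoked before \eqref{qsol} together with \cref{Assumption 1} \ref{ass1i}. Writing $\mathcal I := \mathcal I_{L^1([0, a^\dagger],\R)}$ for brevity and applying $(\mathcal I + \mathcal S(\gamma+\mu_{1}))^{-1}$ on both sides of \eqref{uniqueness1} recasts it as
\[
(\mathcal I + \mathcal R_N)\,e_N = (\mathcal I + \mathcal S(\gamma+\mu_{1}))^{-1} r_N, \qquad \mathcal R_N := (\mathcal I + \mathcal S(\gamma+\mu_{1}))^{-1}(\tilde{\mathcal L}_{N-1} - \mathcal I)\,\mathcal S(\gamma+\mu_{1}).
\]
Existence, uniqueness of $e_N$, and the factor $2$ in the stated bound will then drop out of a standard Neumann-series argument, provided $\|\mathcal R_N\|_{L^1([0, a^\dagger],\R)\leftarrow L^1([0, a^\dagger],\R)} \le 1/2$ for all $N \ge \overline N$.

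The main technical step is therefore to prove $\|(\tilde{\mathcal L}_{N-1} - \mathcal I)\,\mathcal S(\gamma+\mu_{1})\|_{L^1([0, a^\dagger],\R)\leftarrow L^1([0, a^\dagger],\R)}\to 0$ as $N\to\infty$. Under \cref{Assumption 1} \ref{ass1ii}, $\mathcal S(\gamma+\mu_{1})$ maps $L^1([0, a^\dagger],\R)$ compactly into $C^0([0, a^\dagger],\R)$: the image consists of primitives that are uniformly bounded and equicontinuous, hence precompact in $C^0$ by Ascoli--Arzel\`a. On $C^0$, Chebyshev-node Lagrange interpolation converges pointwise (Jackson's theorem plus the $O(\log N)$ growth of the Lebesgue constant), and pointwise convergence on the compactum $\mathcal S(\gamma+\mu_{1})(B_{L^1})$ upgrades to uniform convergence. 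This is precisely the abstract collocation-consistency argument of \cite[Section 4]{breda2020collocation}. Neumann-series inversion then produces $e_N$ uniquely with $\|(\mathcal I + \mathcal R_N)^{-1}\|\le 2$.

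To obtain the explicit rate, I would exploit the identity $\mathcal S(\gamma+\mu_{1}) q(\cdot;\gamma,\alpha) = \alpha - q(\cdot;\gamma,\alpha)$ implied by \eqref{qsol}. Since $\tilde{\mathcal L}_{N-1}$ reproduces constants, $r_N = -(\mathcal I - \tilde{\mathcal L}_{N-1})\,q(\cdot;\gamma,\alpha)$, and linearity in $\alpha$ gives $q(\cdot;\gamma,\alpha) = \alpha\,q(\cdot;\gamma,1)$, so $\|r_N\|_{L^1([0, a^\dagger],\R)} \le |\alpha|\,\|(\mathcal I - \tilde{\mathcal L}_{N-1})\,q(\cdot;\gamma,1)\|_{L^1([0, a^\dagger],\R)}$. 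Bootstrapping on $q' = -(\gamma+\mu_{1})\,q$ shows that $q(\cdot;\gamma,1)$ inherits one derivative beyond $\mu_{1}$: it lies in $C^{s+1}$ under \cref{Assumptions 4} \ref{ass4i}, in $C^{\infty}$ under \ref{ass4ii}, and is real analytic under \ref{ass4iii}. The three rates for $\varepsilon_N$ then follow from the classical Chebyshev interpolation error bounds, the $\log N$ factor being the Lebesgue constant and the $N^{-(s+1)}$, super-polynomial, or geometric decay reflecting the respective regularity assumption.

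The main obstacle is the operator-norm convergence $\|\mathcal R_N\|\to 0$: pointwise convergence of $\tilde{\mathcal L}_{N-1}$ on $C^0$ is classical, but promoting it to norm convergence on $L^1$ crucially relies on the compactness of $\mathcal S(\gamma+\mu_{1})\colon L^1([0, a^\dagger],\R) \to C^0([0, a^\dagger],\R)$ combined with the merely logarithmic growth of the Chebyshev Lebesgue constant. Once that step is in hand, the Neumann series closes the argument and the rate estimates are routine consequences of the regularity bootstrap.
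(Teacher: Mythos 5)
Your overall skeleton matches the paper's: reduce \eqref{uniqueness1} to a perturbed identity, invert via a Neumann/Banach-perturbation argument to get existence, uniqueness and the factor $2$, then bound the residual $r_N$ in $L^1$ by $|\alpha|$ times an interpolation error and invoke Jackson-type estimates together with the $O(\log N)$ Lebesgue constant. Your observation that $r_N=-(\mathcal I-\tilde{\mathcal L}_{N-1})q(\cdot;\gamma,\alpha)$ because $\tilde{\mathcal L}_{N-1}$ reproduces constants, together with the explicit solution $q(a;\gamma,\alpha)=\alpha\,e^{-\gamma a-\int_0^a\mu_1(\hat a)\mathrm d\hat a}$ obtained from $q'=-(\gamma+\mu_1)q$, is a clean equivalent to the paper's direct estimate of $E_{N-1}\bigl(\mathcal S(\gamma+\mu_1)q\bigr)$ (equivalent because best approximation is invariant under subtraction of a constant), and the regularity bootstrap $\mu_1\in C^s\Rightarrow q\in C^{s+1}$ gives exactly the stated orders.

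However, there is a genuine gap in your justification that $\|\mathcal R_N\|_{L^1\leftarrow L^1}\to 0$. You assert that $\mathcal S(\gamma+\mu_1)\colon L^1([0,a^\dagger],\R)\to C^0([0,a^\dagger],\R)$ is compact because ``the image consists of primitives that are uniformly bounded and equicontinuous.'' This is false: the image of the $L^1$ unit ball under $\mathcal S(\gamma+\mu_1)$ is \emph{not} equicontinuous. Take $\phi_n=n\chi_{[a_0,a_0+1/n]}$, so $\|\phi_n\|_{L^1}=1$; then $\mathcal S(\gamma+\mu_1)\phi_n$ jumps by approximately $\gamma+\mu_1(a_0)$ over an interval of width $1/n$, ruling out equicontinuity (and hence compactness into $C^0$) whenever $\gamma+\mu_1(a_0)\ne 0$. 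What one does get from $g'=(\gamma+\mu_1)\phi$ is a \emph{uniform total-variation} bound $\mathrm{TV}(g)\le\|\gamma+\mu_1\|_\infty\|\phi\|_{L^1}$, which yields compactness into $L^1$ (or $L^2$) by Helly/Fr\'echet--Kolmogorov, but not into the uniform norm. Consequently, your ``pointwise on a compactum in $C^0$ upgrades to uniform'' step does not go through. The paper instead appeals to \cite[Theorem Ia]{erdos1937interpolation} for the convergence of Chebyshev interpolation measured in an integral norm, combined with $\Range(\mathcal S)\subset C([0,a^\dagger],\R)$, and then applies the Banach perturbation lemma of \cite[Theorem 10.1]{kress1989linear}; to repair your argument you would need either that route or an Anselone-type collectively-compact argument using the $L^1$-compactness of $\mathcal S(\gamma+\mu_1)$ together with the uniform $O(\log N)$ bound on the Lebesgue constant, not the false $C^0$-compactness.
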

\begin{proof}
By observing that $\Range(\mathcal S )\subset C([0, a^\dagger],\R)$ we have that 
\begin{equation*}
\|\mathcal I_{C([0, a^\dagger],\R)}-{\tilde {\mathcal L}_{N-1}}\|_{L^1([0, a^\dagger],\R)\leftarrow C([0, a^\dagger],\R)} \to 0\text{ as }N\to \infty
\end{equation*}
 thanks to \cite[Theorem Ia]{erdos1937interpolation}, thus we can apply the Banach perturbation Lemma \cite[Theorem 10.1]{kress1989linear} to get 
\begin{align*}\label{ordconv2}
\|e_N\|_{L^1([0, a^\dagger],\R)} \le &2\left\|\left(\mathcal I_{L^1([0, a^\dagger],\R)}+\mathcal S(\gamma+\mu_{1}))\right)^{-1}\right\|_{L^1([0, a^\dagger],\R)\leftarrow L^1([0, a^\dagger],\R)}\\[2mm]&\cdot \|r_N\|_{L^1([0, a^\dagger],\R)}.
\end{align*}
Then
\begin{equation*}
\|r_N\|_{L^1([0, a^\dagger],\R)}\le a^\dagger \|r_N\|_{\infty}\le
\left(1+\Lambda_{N-1}\right)E_{N-1}\left(\mathcal S(\gamma+\mu_{1})q\right)
\end{equation*}
follows from standard results on uniform approximation for $\Lambda_{N-1}$ the Lebesgue constant relevant to $A_N$ and $E_{N-1}\left(f\right)$ the best  uniform approximation error of a continuous function $f$ on the space of polynomials of degree at most $N-1$. The final bound follows since $q$ depends linearly on $\alpha$, the choice of $A_N$ guarantees $\Lambda_{N-1}=O(\log N)$ and estimates of $E_{N-1}\left(\mathcal S(\gamma+\mu_{1})q\right)$ can be obtained by applying Jackson's type theorems \cite[Section 1.1.2]{rivlin1981introduction}.
\end{proof}

\bigskip
The final result follows by comparing $\hat{\mathcal K}^a$ in \eqref{char1} with $\hat{\mathcal K}^a_N$ in \eqref{char2}.
\begin{theorem}\label{theorem5.4}
Let $\gamma$ be a non zero eigenvalue of $\mathcal K^a$ with finite algebraic multiplicity $m$ and let $\Delta$ be a neighborhood of $\gamma$ such that $\gamma$ is the sole eigenvalue of $\mathcal K^a$ in $\Delta$. Under \cref{Assumption 1} there exists $\overline N\in \N$ such that, for $N\ge\overline N$, $\mathcal K_N^a$ has in $\Delta$ exactly $m$ eigenvalues $\gamma _{N,i}$, $i=1,\dots,m$, counting their multiplicities. Moreover,
\begin{equation*}
\max_{i=1,\dots, m} |\gamma_{N,i}-\gamma|\le \rho(N)
\end{equation*}
with
\begin{equation*}
\rho(N)=\begin{cases} O((N^{-(s+1)}\log N)^{1/m}) &\text{under \cref{Assumptions 4} \ref{ass4i}};\\
O((N^{-(r+1)}\log N)^{1/m})&\text{for every integer }r\ge 1\\&\qquad\text{under \cref{Assumptions 4} \ref{ass4ii}};\\
O((p^{-N}\log N)^{1/m})&\text{for some constant } p>1\\ &\qquad\text{under \cref{Assumptions 4} \ref{ass4iii}}.
\end{cases}
\end{equation*}
\end{theorem}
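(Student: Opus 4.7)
The plan is to apply Rouché's theorem to the scalar characteristic equations $1-\hat{\mathcal K}^a(\gamma)=0$ and $1-\hat{\mathcal K}_N^a(\gamma)=0$ constructed above. Three ingredients are needed: (a) a uniform-in-$\gamma$ estimate $|\hat{\mathcal K}^a(\gamma)-\hat{\mathcal K}_N^a(\gamma)|\le C\varepsilon_N$ on a closed neighborhood $\overline{\Delta}$ of $\gamma$, (b) Rouché to count zeros in $\Delta$, and (c) a local factorization of $1-\hat{\mathcal K}^a$ at the zero of order $m$ to extract the $m$-th-root rate.

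For (a), the identity
\begin{equation*}
\hat{\mathcal K}^a(\gamma)-\hat{\mathcal K}_N^a(\gamma)=\int_0^{a^\dagger}\beta(\hat a)\Pi_0(\hat a)\,[q(\hat a;\gamma,1)-q_N(\hat a;\gamma,1)]\,\dd\hat a
\end{equation*}
follows from \eqref{char1} and \eqref{char2} and is bounded by $\|\beta\Pi_0\|_\infty\|q-q_N\|_{L^1([0, a^\dagger],\R)}$. Proposition~\ref{teoremaunicità1dim} with $\alpha=1$ then supplies the $\varepsilon_N$-bound provided $\|(\mathcal I_{L^1([0, a^\dagger],\R)}+\mathcal S(\gamma+\mu_{1}))^{-1}\|$ stays uniformly bounded as $\gamma$ varies in $\overline{\Delta}$. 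This follows from continuity in $\gamma$ of the Volterra resolvent (clear from its Neumann-series expansion) combined with compactness of $\overline{\Delta}$.

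For (b)--(c), observe that $z\mapsto 1-\hat{\mathcal K}^a(z)$ is entire and, by hypothesis, its only zero in $\Delta$ is $\gamma$, of order exactly $m$ (coinciding with the algebraic multiplicity as is standard for scalar characteristic equations of this form). Shrinking $\Delta$ if necessary, there exists $c>0$ with $|1-\hat{\mathcal K}^a|\ge c$ on $\partial\Delta$; for $N$ large, (a) forces $|\hat{\mathcal K}^a-\hat{\mathcal K}_N^a|<c$ on $\partial\Delta$, so Rouché's theorem yields exactly $m$ zeros of $1-\hat{\mathcal K}_N^a$ in $\Delta$ counted with multiplicities. Factoring $1-\hat{\mathcal K}^a(z)=(z-\gamma)^m h(z)$ locally with $h$ analytic and $h(\gamma)\ne 0$, each such zero $\gamma_{N,i}$ satisfies
\begin{equation*}
|\gamma_{N,i}-\gamma|^m\,|h(\gamma_{N,i})|=\bigl|\hat{\mathcal K}_N^a(\gamma_{N,i})-\hat{\mathcal K}^a(\gamma_{N,i})\bigr|\le C\,\varepsilon_N,
\end{equation*}
and the lower bound $|h|\ge |h(\gamma)|/2$ on a possibly smaller neighborhood yields $|\gamma_{N,i}-\gamma|=O(\varepsilon_N^{1/m})$. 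The three regimes of $\varepsilon_N$ in Proposition~\ref{teoremaunicità1dim} then translate directly into the three rates $\rho(N)$ of the statement.

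The main obstacle is the uniformity required in (a): Proposition~\ref{teoremaunicità1dim} is stated pointwise in $\gamma$, so some work is needed to upgrade the operator-norm control of $(\mathcal I_{L^1([0, a^\dagger],\R)}+\mathcal S(\gamma+\mu_{1}))^{-1}$ to a uniform bound on the compact $\overline{\Delta}$. Everything else — the entire-ness of $\hat{\mathcal K}^a$, the matching of algebraic multiplicity with the order of the characteristic zero, the Rouché count, and the $m$-th-root extraction — is routine once that uniformity is secured.
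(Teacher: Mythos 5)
Your proposal follows essentially the same route as the paper's proof, which consists of exactly the bound $|\hat{\mathcal K}^a_N(\gamma)-\hat{\mathcal K}^a(\gamma)|\le a^\dagger\|\beta\Pi_0\|_\infty\|e_N(\cdot;\gamma,1)\|_{L^1}$ followed by an appeal to Rouch\'e's theorem ``as in [Breda--Liessi 2020, Section 4]''; your expanded version (uniformity of the Volterra-resolvent bound on $\overline{\Delta}$, the order-$m$ factorization, the $m$-th-root extraction) is precisely the content the paper delegates to that reference. The uniformity-in-$\gamma$ issue you flag is genuine but, as you note, resolved by the continuity of the resolvent in $\gamma$ together with compactness, exactly as the cited reference handles it.
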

\begin{proof}
The thesis follows from
\begin{gather*}
|\hat{\mathcal K}^a_N(\gamma)-\hat{\mathcal K}^a(\gamma)|\le \int_0^{a^\dagger} |\beta(\hat a)\Pi_0(\hat a)|\cdot\left | q_N(\hat a; \gamma, 1)-q(\hat a; \gamma,1)\right|\dd\hat a\le\\
\le a^\dagger \|\beta\Pi_0\|_{L^\infty([0, a^\dagger],\R)}\left\|e_N(\cdot; \gamma, 1)\right\|_{L^1([0, a^\dagger],\R)}
\end{gather*}
by using Proposition \ref{teoremaunicità1dim} and (eventually) by applying Rouché's Theorem as in \cite[Section 4]{breda2020collocation}.
\end{proof}
\begin{remark}\label{remarkPCW}
The convergence result of \cref{theorem5.4} is preserved in the case one uses a straightforward piecewise collocation approach in presence of possible discontinuities of $\beta$ and $\mu_{1}$ (or of their derivatives). Note that the MATLAB demos available at \url{http://cdlab.uniud.it/software} that we use to make all the tests in sections \ref{numresults} and \ref{neumann} implement this piecewise alternative. 
\end{remark}

\subsection{Final convergence result}\label{5.3}

Summarizing the above results, the main theorem follows as a corollary.
\begin{theorem}\label{teoremaconvergenzaeig}
Let \cref{Assumptions 5} hold.
Let $\lambda\in\sigma(\mathcal B)$, $\gamma\in \sigma(\mathcal K^a)$ and $\theta\in \sigma(\mathcal J^x)$ be such that $\lambda=\gamma+\theta-1$. Suppose that $\gamma$ is a nonzero eigenvalue of $\mathcal K^a$ with finite algebraic multiplicity $m_\gamma$ and let $\Delta_\gamma$ be a neighborhood of $\gamma$ such that $\gamma$ is the sole eigenvalue of $\mathcal K^a$ in $\Delta_\gamma$.
Moreover, suppose that  $\theta\in\R$ is an isolated nonzero eigenvalue of $\mathcal J^x$ with finite algebraic multiplicity $m_\theta$, ascent $l_\theta$ and $\Delta_\theta$ is a neighborhood of $\theta$ such that $\theta$ is the sole eigenvalue of $\mathcal J^x$ in $\Delta_\theta$. 

Then there exist $\overline N,\overline M\in \N$ such that, for $N\ge \overline N$ and $M\ge \overline M$, ${\mathcal K}_N$ has in $\Delta_\gamma$ exactly $m_\gamma$ eigenvalues $\gamma_{N,i}$, $i=1,\dots, m_\gamma$, counting their multiplicities, and $\mathcal J_M^x$ has in $\Delta_\theta$ exactly $m_\theta$ eigenvalues $\theta_{M, j}$, $j=1,\dots, m_\theta$, counting their multiplicities, and
\begin{equation*}
\max_{\substack{i=1,\dots, m_\gamma\\ j=1,\dots, m_\theta}} |\lambda-(\gamma_{N,i}+\theta_{M,j}-1)|\le \rho_\gamma(N)+\rho_\theta(M)
\end{equation*}
with
\begin{gather*}
\rho_\gamma(N)=\begin{cases} O((N^{-(s+1)}\log N)^{1/m_\gamma}) &\text{under \cref{Assumptions 4} \ref{ass4i}};\\
O((N^{-(r+1)}\log N)^{1/m_\gamma})&\text{for every integer }r\ge 1\\&\qquad\text{ under \cref{Assumptions 4} \ref{ass4ii}};\\
O((p^{-N}\log N)^{1/m_\gamma})&\text{for some constant } p>1\\&\qquad\text{under \cref{Assumptions 4} \ref{ass4iii}};
\end{cases}\\
\end{gather*} 
and $\rho_\theta(M)=O\left(M^{-\frac{k}{l_\theta}}\right)$.
\end{theorem}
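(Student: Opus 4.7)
The plan is to assemble the conclusion directly from the two one-variable convergence results already established, namely Corollary \ref{corollariospec} for the spatial eigenvalues and Theorem \ref{theorem5.4} for the age eigenvalues, and then combine them through the tensor-product structure recorded in \eqref{spettriequiv}.

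First I would invoke Theorem \ref{theorem5.4} on the eigenvalue $\gamma$ of $\mathcal{K}^a$: under \cref{Assumption 1} together with whichever regularity alternative in \cref{Assumptions 4} is assumed, there exists $\overline{N}_\gamma$ such that for $N\ge \overline{N}_\gamma$ the matrix $\mathcal{K}_N^a$ has in $\Delta_\gamma$ exactly $m_\gamma$ eigenvalues $\gamma_{N,i}$ satisfying $\max_i|\gamma_{N,i}-\gamma|\le \rho_\gamma(N)$, where $\rho_\gamma$ takes one of the three stated forms. Analogously, using Corollary \ref{corollariospec} (which packages Theorem \ref{teospec} with the Sobolev regularity granted by \cref{Assumptions 5}), I obtain $\overline{M}_\theta$ such that for $M\ge \overline{M}_\theta$ the matrix $\mathcal{J}_M^x$ has in $\Delta_\theta$ exactly $m_\theta$ eigenvalues $\theta_{M,j}$ with $\max_j|\theta_{M,j}-\theta|=O(M^{-k/l_\theta})=\rho_\theta(M)$.

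Setting $\overline N:=\overline N_\gamma$ and $\overline M:=\overline M_\theta$, the counting statements of the theorem then follow verbatim from those of the two cited results. For the error estimate I would write, for any admissible pair $(i,j)$,
\begin{equation*}
|\lambda-(\gamma_{N,i}+\theta_{M,j}-1)|=|(\gamma-\gamma_{N,i})+(\theta-\theta_{M,j})|\le |\gamma-\gamma_{N,i}|+|\theta-\theta_{M,j}|,
\end{equation*}
and then take the maximum over $i=1,\dots,m_\gamma$ and $j=1,\dots,m_\theta$ to obtain the bound $\rho_\gamma(N)+\rho_\theta(M)$. Note that, while not strictly needed for the stated inequality, the identity \eqref{spettriequiv} makes it clear that each $\gamma_{N,i}+\theta_{M,j}-1$ is a genuine eigenvalue of $\mathcal B_{N,M}$, so the bound is meaningful as an approximation of $\lambda\in\sigma(\mathcal B)$ through $\sigma(\mathcal B_{N,M})$.

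There is essentially no hard step here; the work is a cross-reference exercise because the separable structure under \cref{assumptionrates} decouples the problem into the two independently treated eigenvalue problems of sections \ref{dissconv} and \ref{convsenzadiff}. The only mildly subtle point to verify is that the neighborhoods $\Delta_\gamma$ and $\Delta_\theta$, which isolate $\gamma$ in $\sigma(\mathcal{K}^a)$ and $\theta$ in $\sigma(\mathcal{J}^x)$ respectively, yield (through the map $(\gamma',\theta')\mapsto \gamma'+\theta'-1$) a neighborhood of $\lambda$ that contains precisely the $m_\gamma m_\theta$ approximating values $\gamma_{N,i}+\theta_{M,j}-1$ produced above; this is immediate from \eqref{spettriequiv} once $N$ and $M$ are large enough that no spurious eigenvalues from outside $\Delta_\gamma$ or $\Delta_\theta$ enter the prescribed neighborhood of $\lambda$, which can be ensured by shrinking $\Delta_\gamma$ and $\Delta_\theta$ if necessary.
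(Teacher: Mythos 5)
Your proposal is correct and follows precisely the route the paper intends: the paper states only that ``the main theorem follows as a corollary'' of \cref{corollariospec} and \cref{theorem5.4}, which you spell out by applying each one-variable result, invoking the tensor identity \eqref{spettriequiv}, and finishing with the triangle inequality on $\lambda-(\gamma_{N,i}+\theta_{M,j}-1)=(\gamma-\gamma_{N,i})+(\theta-\theta_{M,j})$.
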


\section{Implementation}\label{secimplementation}
We now describe how to explicitly construct the entries of the matrices representing the discretized operators obtained in \cref{sezioneapproccio}.

\medskip
Let $(i;j):=(i-1)(M+1)+j$ for $i=1,\dots, N$ and $j=0,\dots, M+1$.
Thanks to the cardinal property of the Lagrange polynomials $\ell_{N,h}(a_i)=\delta_{ih}$, $i,h=0,\dots, N$, and to the orthonormality of $\{P_j\}_{j=0}^\infty$, the action of $\mathcal B_{N,M}$ on $\Psi=(\Psi_0;\dots; \Psi_{N(M+1)})\in\R^{N(M+1)}$ reads
\begin{gather*}
\left[\mathcal B_{N,M}\Psi\right]_{(i; j)}=
d\sum_{k=0}^M \left[ \int_{\Omega}(\mathcal J^xP_k-P_k)(x)P_j(x)\dd x\right]\Psi_{(i;k)}-
\sum_{h=1}^N\ell_{N,h}'(a_i)\Psi_{(h; j)}\\
+\sum_{h=1}^N\sum_{k=0}^M \left[ \int_\Omega P_k(x) \left(\int_0^{a^\dagger}\beta(\hat a, x)\Pi_0(\hat a)\ell_{N,h}'(\hat a)\dd\hat a \right)P_j(x)\dd x\right]\Psi_{(h; k)}\\-
\sum_{h=1}^N\sum_{k=0}^M \left[\int_\Omega P_k(x)\left(\int_0^{a_i}\mu_{1}(\hat a, x)\ell_{N,h}'(\hat a)\dd\hat a\right) P_j(x) \dd x\right]\Psi_{(h; k)}.
\end{gather*}
In particular, the matrix expression for $\mathcal B_{N,M}$ reads
\begin{equation*}
\mathcal B_{N,M}=d(\mathcal J_M^x-\mathcal I_{\R^{M+1}})\otimes \mathcal I_{\R^{N}}-\mathcal I_{\R^{M+1}}\otimes \mathcal D_N^a +\mathcal H_{N,M}-\mathcal W_{N,M}
\end{equation*}
where, for $i,h=1,\dots, N$ and $j,k=0,\dots, M$,
\begin{align}
&\label{espexplicitcomb1}(\mathcal J_{M}^x)_{j,k}:=\int_\Omega\mathcal J^xP_k(x) P_j(x)\dd x,\\
&\notag(\mathcal D_{N}^a)_{i,h}:=\ell'_{N,h}(a_i),\\
&\label{espexplicitcomb2}(\mathcal H_{N,M})_{(i;j), (h, k)}:=\int_\Omega P_k(x) \left(\int_0^{a^\dagger}\beta(\hat a, x)\Pi_0(\hat a)\ell_{N, h}'(\hat a)\dd\hat a \right)P_j(x)\dd x,\\
&\label{espexplicitcomb3}(\mathcal W_{N,M})_{(i;j), (h, k)}:=\int_\Omega P_k(x)\left(\int_0^{a_i}\mu_{1}(\hat a, x)\ell_{N, h}'(\hat a)\dd\hat a\right) P_j(x) \dd x.
\end{align}
Observe that $\mathcal J_M^x$ is symmetric since $\mathcal J^x$ is self-adjoint.
Moreover, under \cref{assumptionrates}, one can write $\mathcal H_{N,M}=\mathcal I_{\R^{M+1}}\otimes\mathcal H_{N}^a$ and  $\mathcal W_{N,M}=\mathcal I_{\R^{M+1}}\otimes\mathcal W_{N}^a$, where, for $i,h=1,\dots, N$,
\begin{align*}
&(\mathcal H_N^a)_{i, h}:=\int_0^{a^\dagger}\beta(a)\Pi_0(a)\ell_{N,h}'(a)\dd a,\\
&(\mathcal W_N^a)_{i,h}:=\int_0^{a_i}\mu_{1}(a)\ell_{N,h}'(a)\dd a.
\end{align*}
Clearly, $\mathcal K_N^a$ in \eqref{Kna} reads
\begin{equation*}
\mathcal K_N^a=-\mathcal D_N^a +\mathcal H_N^a-\mathcal W_N^a.
\end{equation*}

Finally, if the integrals in \eqref{espexplicitcomb1}, \eqref{espexplicitcomb2} and \eqref{espexplicitcomb3}  (or the following ones) can not be computed analytically, we need to approximate them numerically. In this regards, we use the Gauss-Legendre quadrature for the integrals in space, the Clenshaw-Curtis quadrature for the integrals in age in $[0, a^\dagger]$ and the $i$-th row of the inverse of the differentiation matrix $\mathcal D_N^a$ for the integrals in $[0, a_i]$, $i=1,\dots, N$ \cite{diekmann2020pseudospectral}. Of course the relevant quadrature errors must be suitably taken into account in bounding the final error on the eigenvalues. The results in \cite{trefethen2008gauss} ensure that \cref{teoremaconvergenzaeig} still holds under regularity assumptions on the model coefficients. We also remark that by using Gauss-Legendre quadrature, the spectral discretization in space becomes equivalent to collocating at the Legendre-Gauss zeros \cite[Section 3.2.5]{canuto1988spectral}.

\section{Numerical results}\label{numresults}
The following test cases concern model \eqref{model}. In particular the first one confirms \cref{teoremaconvergenzaeig} under \cref{assumptionrates}. The second one experimentally shows that similar results are preserved even without \cref{assumptionrates} but still under the conditions of \cite{ducrot2022age}. Both concern $\Omega=(-l, l)$ for $l>0$. In the final third case, instead, we consider a circular domain (that indeed can be mapped into a rectangle as assumed at the beginning of \cref{sezioneapproccio}).

\medskip
\paragraph{Example 1}\label{ex1par}
Inspired by \cite{breda2007stability, kao2010random, kuniya2018global}, we consider 
$a^\dagger=1$,
\begin{equation*}
\beta(a, x)=\beta(a)=8(1-\ln R)\chi_{\left[\frac{1}{2},\ a^\dagger\right]}(a),\quad  \mu_0(a)=\frac{1}{a^\dagger-a}, \quad \mu_1(a, x)\equiv0
\end{equation*} 
and
\begin{equation*}
J(x)=\begin{cases}k e^{-\frac{1}{4-x^2}}&\text{if }x\in (-2, 2)\\ 0 &\text{otherwise}\end{cases}
\end{equation*} 
with $R>0$ and $k=\left(\int_{-2}^2e^{-\frac{1}{4-x^2}}\dd x\right)^{-1}$.
\begin{figure}
\centering
\includegraphics[width=.97\textwidth]{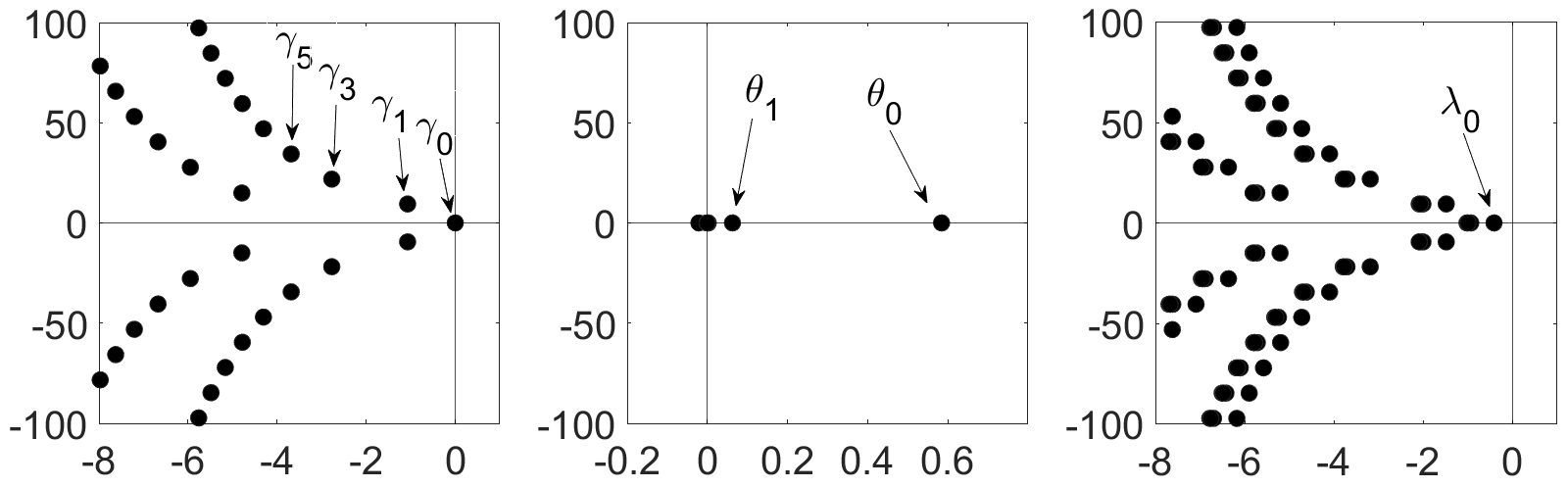}
\caption{\nameref{ex1par}: computed spectra in the complex plane with $l=1$ and $d=1$ for $\mathcal K^a$ (left), $J^x$ (center) and $\mathcal B$ (right) with $N=M=100$.}\label{primafig}

\medskip
\includegraphics[width=1.\textwidth]{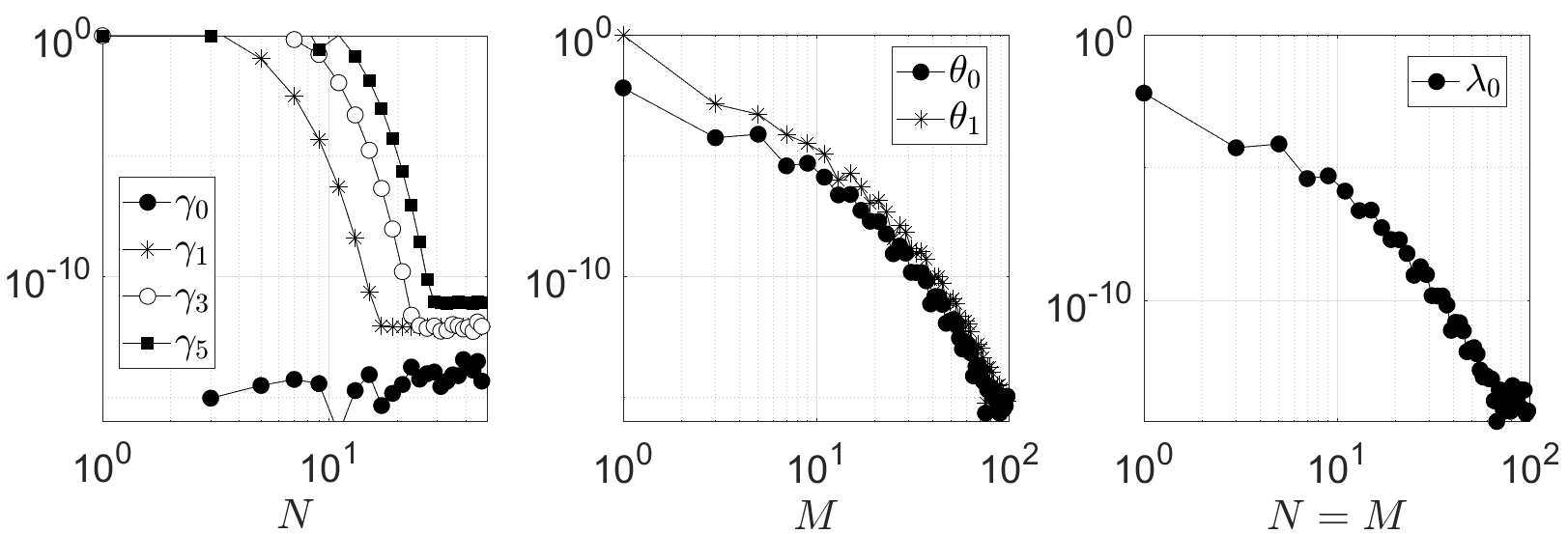}
\caption{\nameref{ex1par}: errors in computing $\gamma_0,\gamma_1, \gamma_3, \gamma_5$ (left), $\theta_0, \theta_1$ (center) and $\lambda_0$ (right) with $l=1$ and $d=1$ for increasing values of $N$ and $M$. The reference values $\gamma_1\approx -1.070249248997922 + 9.438599419667414i$, $\gamma_3\approx -2.772533303698945 +21.842287295644464i$, $\gamma_5\approx -3.683984412603065 +34.410428157635607i$ are obtained with $N=1000$; the reference values $\theta_0\approx 0.584294111974084$ and $\theta_1\approx 0.062556017866521$ are obtained with $M=1000$.}\label{terzafig}
\end{figure}
With these choices \cref{assumptionrates} holds, thus we compute the eigenvalues of $\mathcal K_N^a$ and $\mathcal J_M^x$ separately. We use $N=M$.
Moreover, $\Pi_0(a)=1-a$ and, for $R=1$, $\gamma_0=0$.\footnote{This can be derived as in \cite{breda2007stability} by observing that the characteristic equation for $\mathcal K^a$ reads 
\begin{equation*}
1=\int_{\frac{1}{2}}^1 8(1-\log R)(1-a)e^{-\lambda a}\dd a.\end{equation*}} 
Finally, since $\beta$ is piecewise defined, we resort to the piecewise approach mentioned in \cref{remarkPCW}.

\begin{figure}
\centering
\includegraphics[width=1.\textwidth]{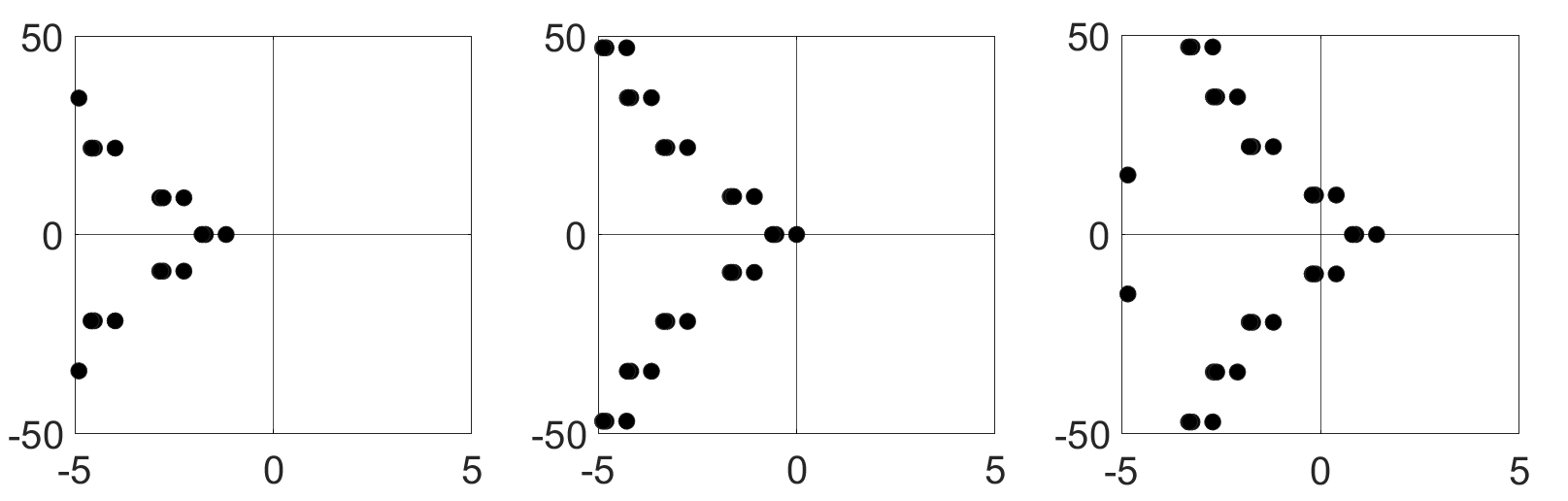}
\caption{\nameref{ex1par}: computed spectra of $\mathcal B$ in the complex plane with $l=1$, $d=1$ and $R=1.5$ (left), $R\approx 0.7277628676660066$ (center) and $R=0.1$ (right) with $N=M=100$.}\label{quintafig}
\end{figure}
Computed spectra are shown in \cref{primafig} for $l=1$ and $d=1$. For the same values, the convergence behavior is illustrated in \cref{terzafig}, which confirms \cref{corollariospec} and \cref{theorem5.4} showing also how the error constants are affected by $|\gamma|$ and $|\theta|$.

\medskip
\begin{figure}
\centering
\includegraphics[width=0.45\textwidth]{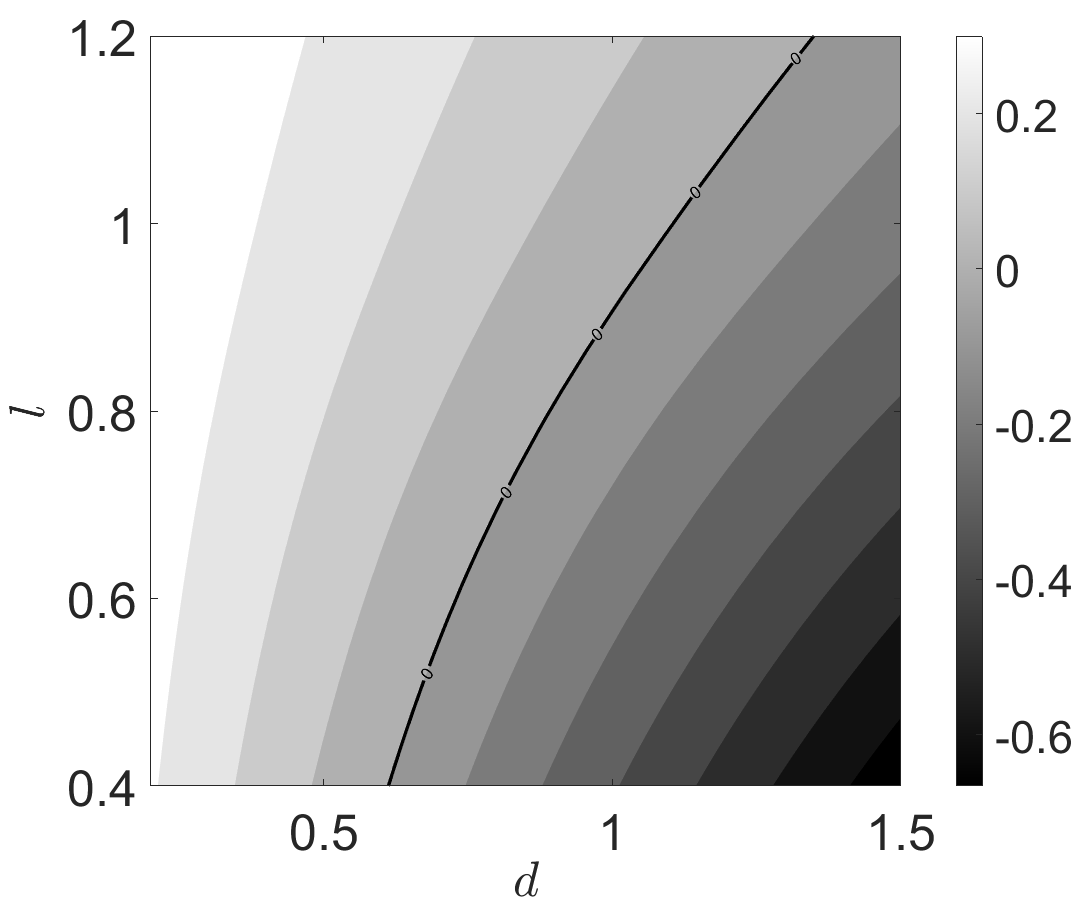}
\caption{\nameref{ex1par}: level curves for $\lambda_0$ as a function of $(d,l)$ and stability boundary (thick line) with $R=0.7$ and $N=M=30$.}\label{biforcazione1}
\end{figure}

With the proposed technique one can easily perform a bifurcation analysis of the null equilibrium with respect to $R$, \cref{quintafig} with $l=1$ and $d=1$, finding a transcritical bifurcation at $R\approx 0.7277628676660066$ (correspondingly $\gamma_0\approx 0.415705888025916$). Level curves of $\lambda_0$ as a function of $(d,l)$ are reported in \cref{biforcazione1}. The principal eigenvalue is a decreasing function of $d$ and an increasing function of $l$, confirming the results in \cite{ducrot2022age}.   

\medskip
\paragraph{Example 2}\label{ex2par}
We consider $a^\dagger=1$, 
\begin{equation*}
\beta(a, x)=\beta_0\chi_{\left[\frac{1}{2},\ a^\dagger\right]}(a)(2-x^2),\quad  \mu_0(a)=\frac{1}{a^\dagger-a}, \quad \mu_1(a, x)=\frac{1}{1+x^2}
\end{equation*}
and 
\begin{equation*}
J(x)=\pi^{-\frac{1}{2}}e^{-x^2}
\end{equation*}
with $\beta_0>0$.

Differently from \cref{ex1par}, $\beta$ and $\mu_1$ depend also on $x$.
Yet, these choices satisfy the conditions of \cite{ducrot2022age}.
Again, we use the piecewise approach with $N=M$.

\Cref{sestafig} shows the computed spectrum of $\mathcal B$ (left) and the convergence diagram for the first two rightmost eigenvalues $\lambda_0$ and $\lambda_1$ (right) with $\beta_0=8$ and $d=l=1$.
\cref{settimafig} and \cref{settimafig1} show the computed eigenfunctions of $\mathcal B$ and $\mathcal A$, respectively, for $\lambda_0$ and $\lambda_1$.
In \cref{settimafig1} (left) we observe the presence of a positive eigenfunction with a simple dominant real eigenvalue, i.e., $\lambda_0$ , whereas in \cref{settimafig1} (right) the eigenfunction of $\lambda_1$ changes sign. Note that in \cref{settimafig} (left) the corresponding eigenfunction of $\mathcal B$ is increasing and vanishes only for $a=0$ as it is relevant to the age-integrated state, whereas in \cref{settimafig1} (right) the eigenfunction of $\lambda_1$ is not monotone.  
Accordingly to the theory of \cite{ducrot2022age}, this suggests that the proposed approach correctly approximates the principal eigenvalue $\lambda_0$.
Eventually, \cref{biforcazione2} shows the level curves of $\lambda_0$ as a function of $(d, l)$. As in \nameref{ex1par}, $\lambda_0$ is a decreasing function of $d$ and an increasing function of $l$, confirming again the results in \cite{ducrot2022age}.
\begin{figure}
\centering
\includegraphics[width=0.73\textwidth]{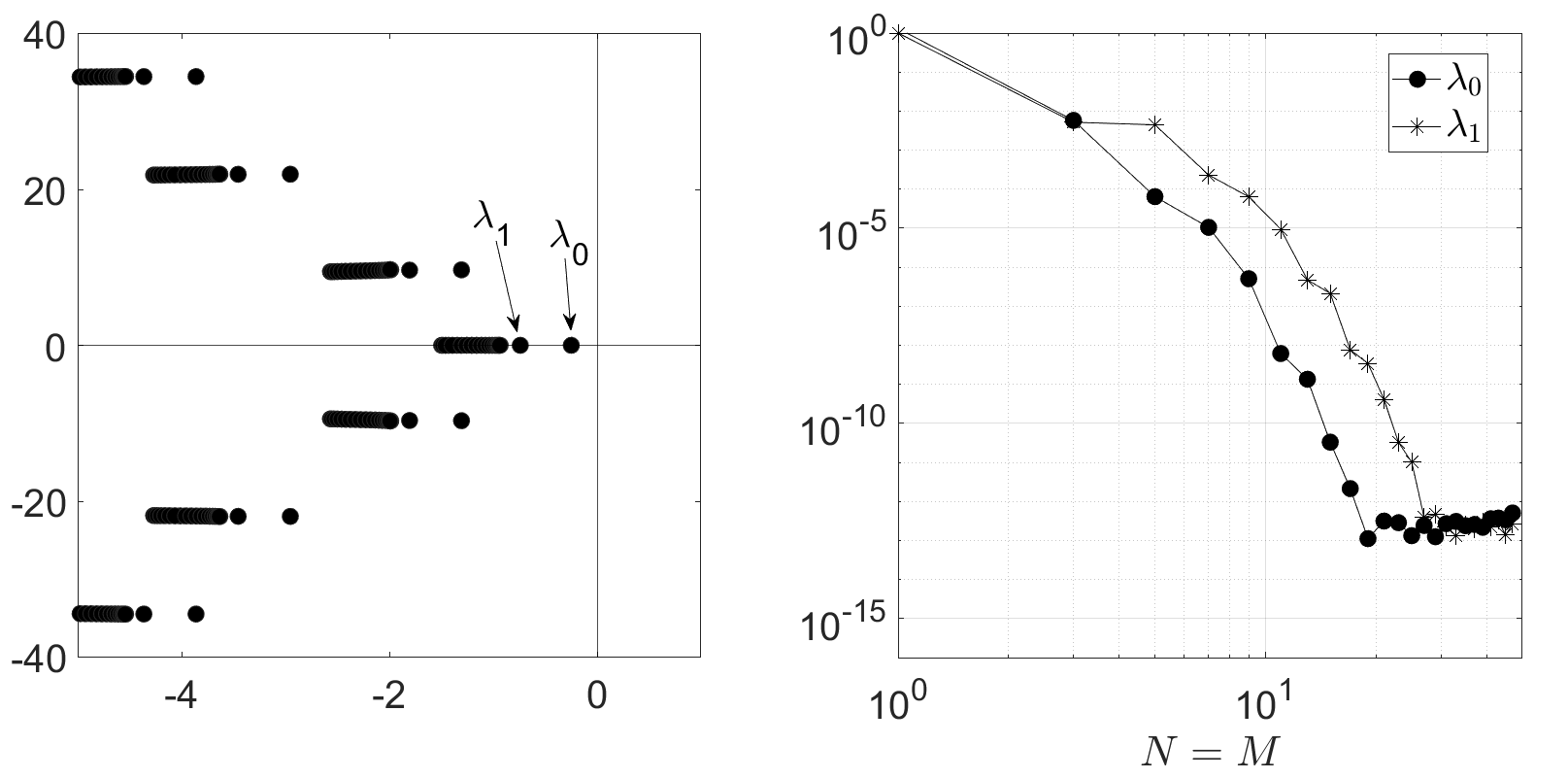}
\caption{\nameref{ex2par}: spectrum of $\mathcal B$ in the complex plane computed with $N=M=50$ (left) and convergence diagram for the first two rightmost eigenvalues $\lambda_0$ and $\lambda_1$ (right) with $\beta_0=8$ and $d=l=1$. The reference values $\lambda_0\approx-0.248872934970194$ and $\lambda_1\approx-0.739612643296491$ are obtained with $N=M=100$.}\label{sestafig}
\includegraphics[width=.9\textwidth]{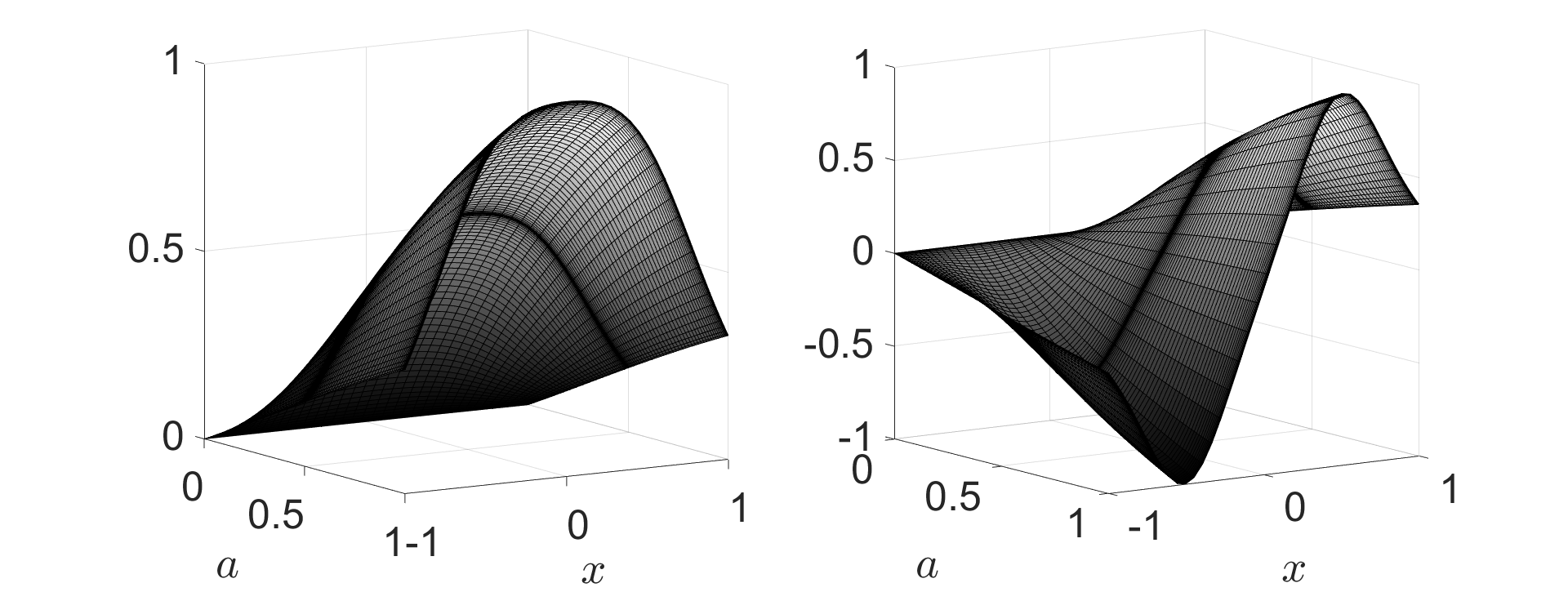}
\caption{\nameref{ex2par}: computed eigenfunctions of $\mathcal B$ relevant to $\lambda_0$ (left) and $\lambda_1$ (right) with $\beta_0=8$, $d=l=1$ and $N=M=50$.}\label{settimafig}
\includegraphics[width=.9\textwidth]{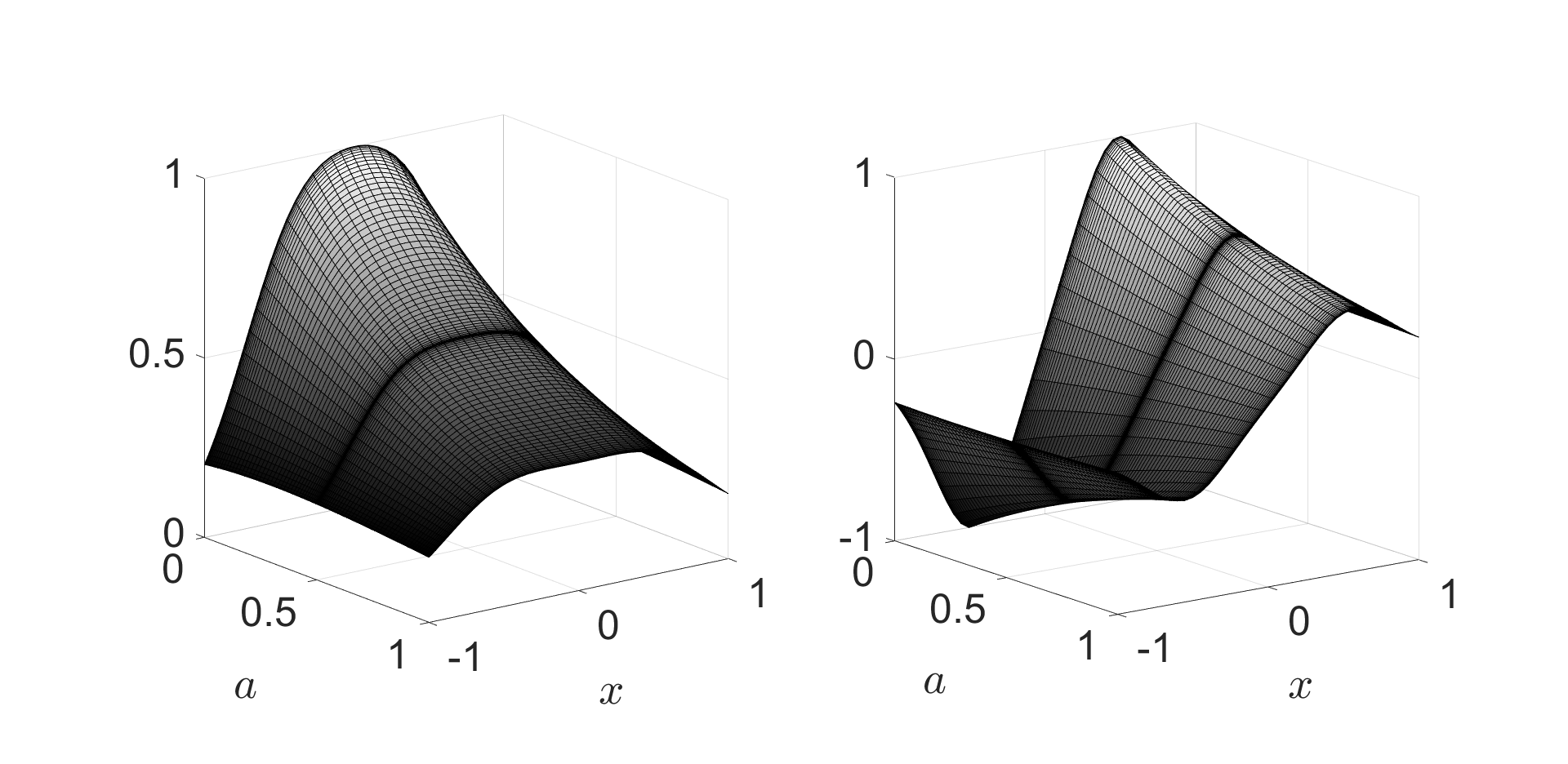}
\caption{\nameref{ex2par}: computed eigenfunctions of $\mathcal A$ relevant to $\lambda_0$ (left) and $\lambda_1$ (right) with $\beta_0=8$, $d=l=1$ and $N=M=50$.}\label{settimafig1}
\end{figure}

\medskip
\paragraph{Example 3}\label{duedim}
We consider $a^\dagger=1$, $d=1$, 
\begin{equation*}
\beta(a, x)=\beta(a)=\beta_0\chi_{\left[\frac{1}{2},\ a^\dagger\right]}(a),\quad  \mu_0(a)=\frac{1}{a^\dagger-a}, \quad \mu_1(a, x)\equiv0,
\end{equation*} 
\begin{equation*}
J(x)=\pi^{-1}e^{-\|x\|_2^2}
\end{equation*} 
and $\Omega:=\{x\in\R^2\ |\ \|x\|_2<R\}$ for $R>0$. 
By using polar coordinates the diffusion operator $\tilde{\mathcal J^x}$ on $AC_0\left([0, a^\dagger], L^2([0, R]\times[0,2\pi],\R)\right)$ reads
\begin{equation*}
\tilde{\mathcal J^x}\tilde \psi(a, r,\theta)=\int_0^R\int_0^{2\pi}J(r\cos\theta-\rho\cos\varphi,r\sin\theta-\rho\sin\varphi)\tilde\psi(a, \rho,\varphi)\rho\dd\varphi\dd \rho.
\end{equation*}
Observe that with these choices \cref{assumptionrates} holds. 
Again, we use the piecewise approach with $N=M$ (note that, being $\Omega\subset\R^2$, here $M$ represents the degree of the univariate polynomials in both directions of the tensorial approach).
\Cref{2dimfig} shows the computed spectrum of $\mathcal B$ (left) and the convergence diagram for the first two rightmost eigenvalues $\lambda_0$ and $\lambda_1$ (right) with $\beta_0=8$ and $R=1$.
\Cref{biforcazione3} shows the level curves of $\lambda_0$ as a function of $(d, l)$, which again confirm the results of \cite{ducrot2022age}.
\begin{figure}
\centering
\includegraphics[width=0.41\textwidth]{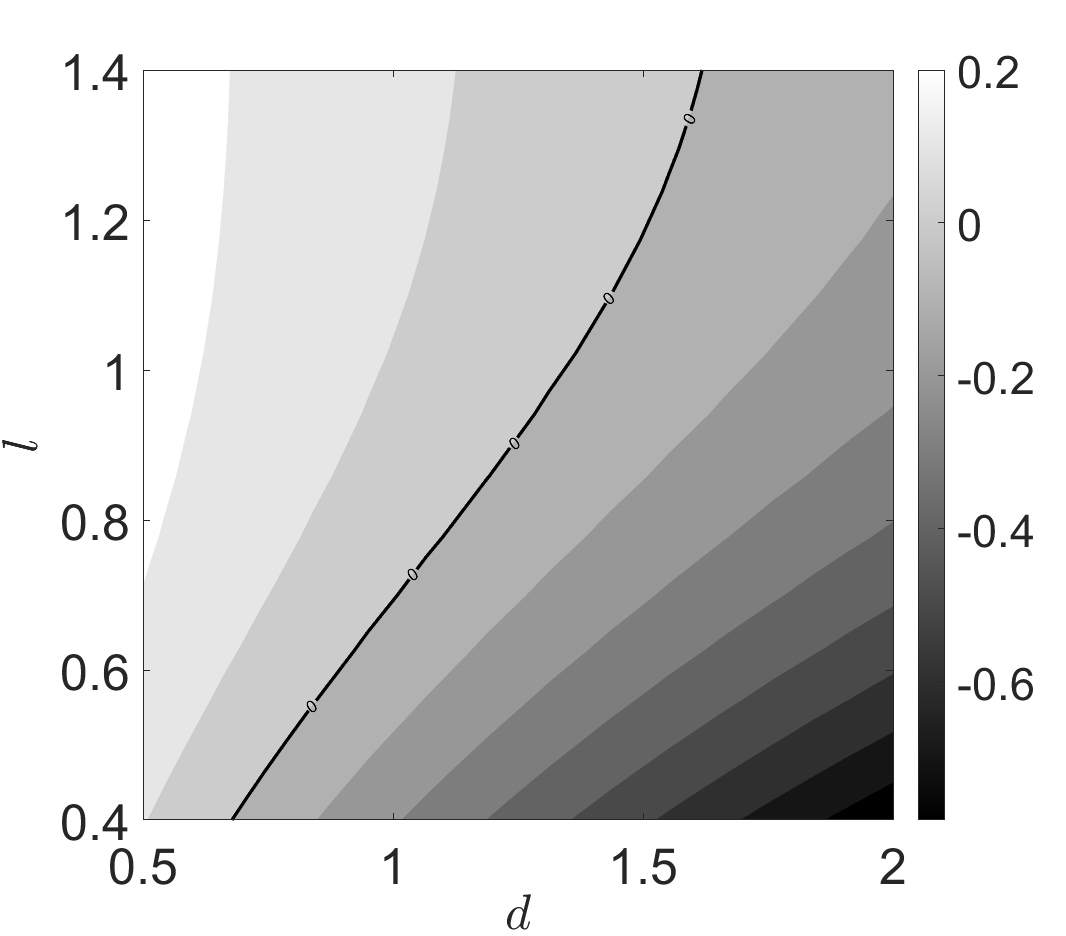}
\caption{\nameref{ex2par}: level curves for $\lambda_0$ as a function of $(d,l)$ and stability boundary (thick line) with $\beta_0=10$ and $N=M=30$.}\label{biforcazione2}
\smallskip
\centering
\includegraphics[width=0.75\textwidth]{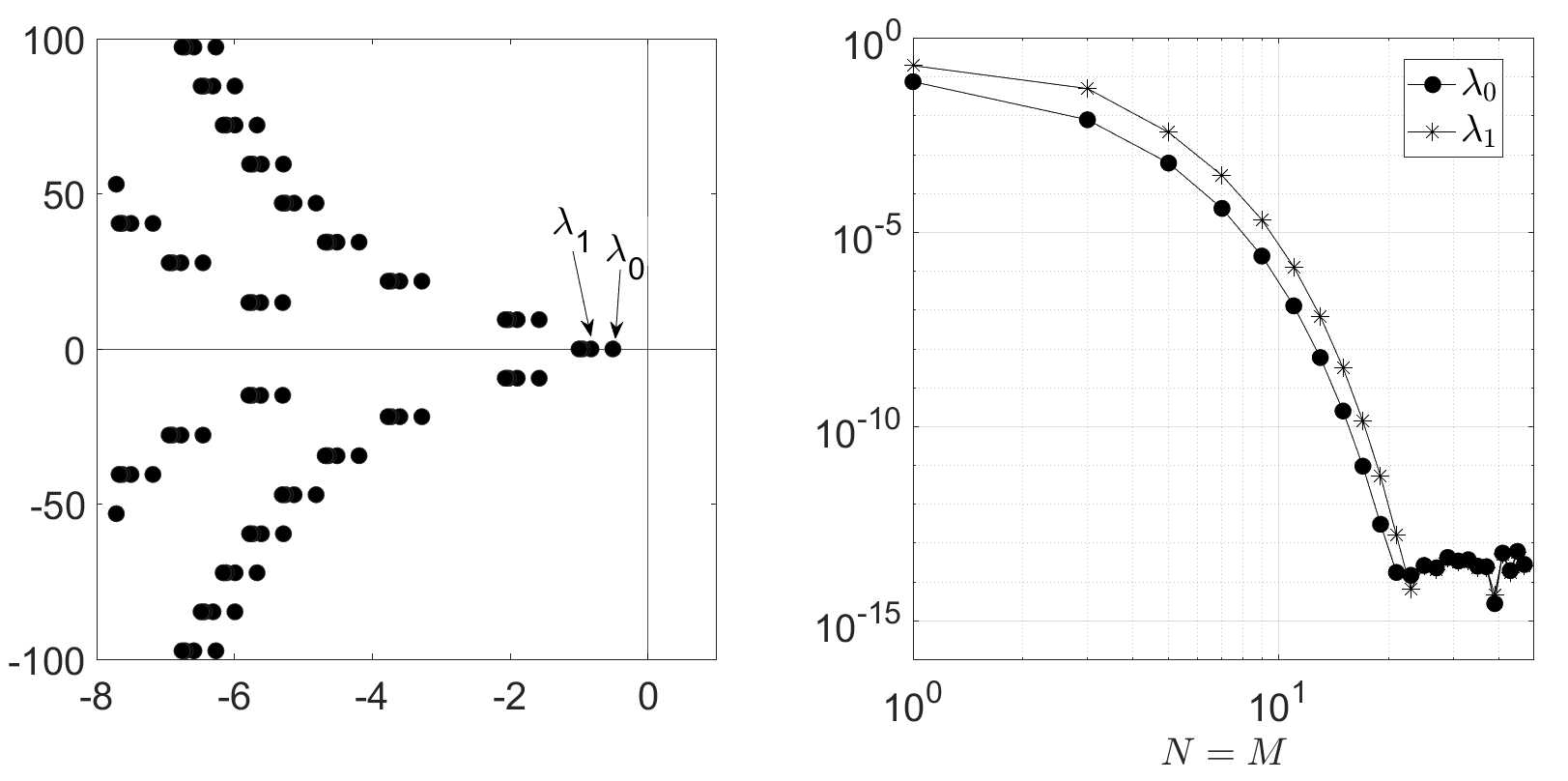}
\caption{\nameref{duedim}: spectrum of $\mathcal B$ in the complex plane computed with $N=M=50$ (left) and convergence diagram for the first two rightmost eigenvalues $\lambda_0$ and $\lambda_1$ (right) with $\beta_0=8$, $d=R=1$. The reference values $\lambda_0\approx-0.508459905995625$ and $\lambda_1\approx-0.827016421828640$ are obtained with $N=M=100$.}\label{2dimfig}
\includegraphics[width=.42\textwidth]{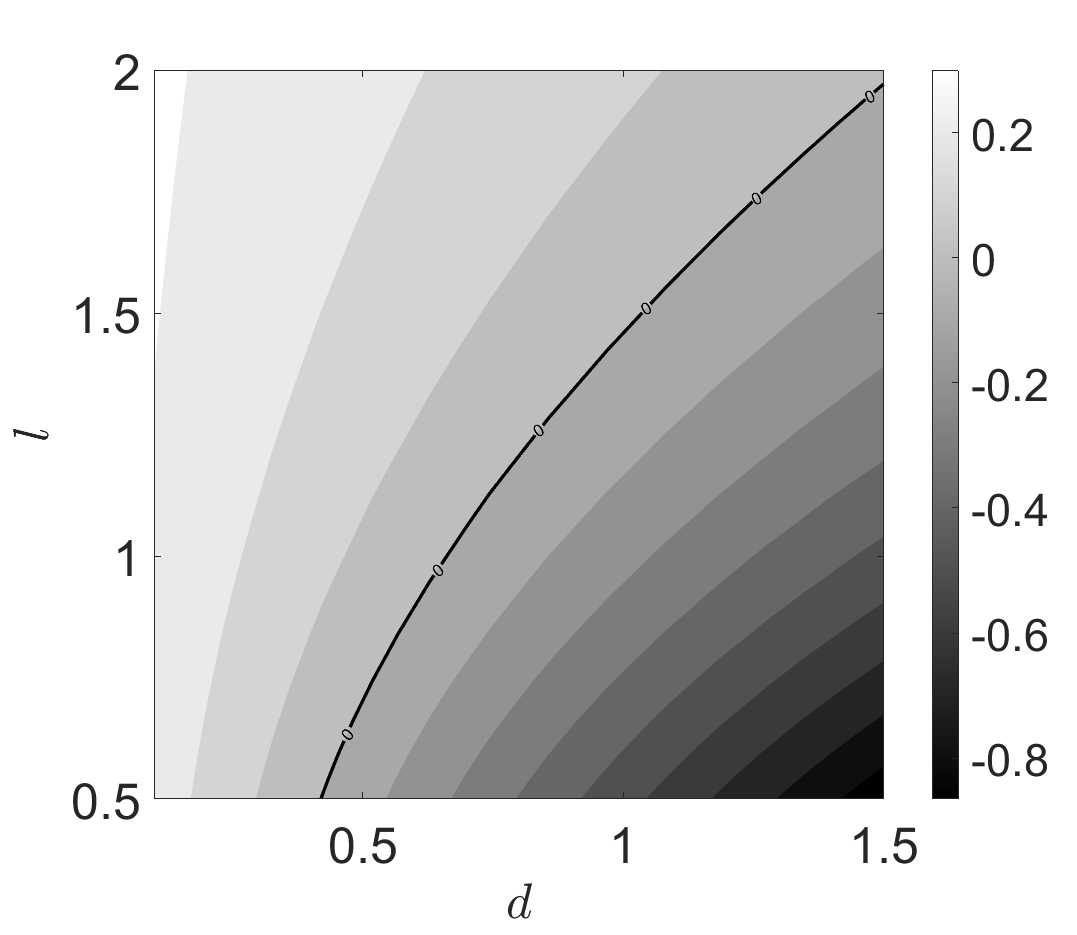}
\caption{\nameref{duedim}: level curves of $\lambda_0$ as a function of $(d,R)$ and stability boundary (thick line) with $\beta_0=10$ and $N=M=30$.}\label{biforcazione3}
\end{figure}
\section{Nonlocal diffusion of Neumann type}\label{neumann}
Now we briefly consider models with nonlocal diffusion of Neumann type, i.e., models in which the diffusion is limited to a certain region $\Omega\subset\R^n$ \cite{kang2022principal}. The prototype model is
\begin{equation}\label{diffNeumann}
\left\{\setlength\arraycolsep{0.1em}\begin{array}{rlll} 
\mathcal Du(t, a, x)&=&d\displaystyle\int_{\Omega}J(x-y)[u(t, a, y)\\[2mm]&&-u(t, a, x)]\dd y &\\[3mm]
 &&-\mu(a, x) u(t, a, x),& t>0,\  a\in[0, a^\dagger],\ x\in\Omega,\\[2mm]
u(t, 0, x)&=&\displaystyle\int_0^{a^\dagger} \beta(\hat  a, x)u(t,\hat  a, x)\dd\hat  a,\quad& t>0,\ 
x\in\Omega,\\[3mm]
u(0, a, x)&=&u_0(a, x),& a\in[0, a^\dagger],\ x\in\Omega,
\end{array} 
\right. 
\end{equation}
where $\mathcal D$ is defined as in \eqref{grad}.
Again, the semigroup generated by \eqref{diffNeumann} is not eventually compact in general, yet stability is still governed by the spectral abscissa $s(\mathcal A)$ of the relevant infinitesimal generator $\mathcal A$ (which can be derived as in \cref{Abstract setting}). Moreover, $s(\mathcal A)$ can be shown to be its principal eigenvalue under suitable assumptions \cite{kang2022principal}.

By replacing $\mathcal J-\mathcal I_U$ with $\mathcal J-\mathcal C$
for $\mathcal C\colon U\to U$ defined as
\begin{equation*}
\mathcal C \phi(a, x):=\phi(a, x)\int_{\Omega} J(x-y)\dd y
\end{equation*} and by proceeding as in sections \ref{Abstract setting} and \ref{sezioneapproccio}, we can reformulate \eqref{diffNeumann} by integration of the age-state and derive a similar numerical approximation for the spectrum of the infinitesimal generator.

Anyway, let us remark that a decomposition as in \cref{decompositioneig} has not been proved to hold true for the case of Neumann diffusion, although it is easy to see that
\begin{equation*}
\sigma_p(\mathcal K^a)+\sigma_p(\mathcal J^x-\mathcal C^x)\subseteq \sigma_p(\mathcal B),
\end{equation*}
where $\mathcal  C^x\colon L^2(\Omega,\R)\to L^2(\Omega,\R)$ is defined as
\begin{equation*}\label{C01}
\mathcal C^xg(x):=g(x)\int_{\Omega}J(x-y)\dd y.
\end{equation*}
Nevertheless, in \cite[Theorem 4.1]{kang2022principal}  it is proved that if $\lambda_0\in\sigma_p(\mathcal A)$ is associated with a positive eigenfunction $\phi$, then  $\lambda_0$ is the principal eigenvalue of $\mathcal A$. Holding this true, we can still characterize the principal eigenvalue of $\mathcal B$ under \cref{assumptionrates}.
In fact the Neumann eigenvalue problem 
\begin{equation*}
-(\mathcal J^x-\mathcal C^x)g(x)=\theta g(x)
\end{equation*}
has principal eigenvalue $\theta_0=0$ with $g_0$ constant \cite{rossi2022first}.
Thus $s(\mathcal B)=\lambda_0=\gamma_0-\theta_0=\gamma_0$ with associated eigenfunction $\psi(a, x)=f_0(a)$, where $f_0$ is the eigenfunction for $\mathcal K^a$ relevant to the principal eigenvalue $\gamma_0$.
Finally, a decomposition like \eqref{spettriequiv} holds for the discretized problem.

In light of the above results, we can proceed as in \cref{sezdim} to prove that, under \cref{assumptionrates}, the eigenvalues of $\mathcal B_{N,M}$ converge to those of $\mathcal B$ (in particular $\lambda_0$ is approximated by a converging sequence of eigenvalues).
The proof again consists in considering the separated eigenvalue problems for $\mathcal K_N^a$ and $\mathcal J_M^x-\mathcal C_M^x$. 
While the convergence analysis relevant to $\mathcal K_N^a$ remains unchanged, a slight modification is required for the space counterpart. Indeed, $\mathcal J^x-\mathcal C^x$ is not compact in general, but being bounded and self-adjoint, \cref{teospec} and \cref{corollariospec} remain valid thanks to the strong stabilty guaranteed by a (Galerkin-type) spectral projection method as the one presented in \cref{sezioneapproccio} \cite[Section 4]{chatelin2011spectral}.

\bigskip
As a numerical test, we consider the same coefficients of \nameref{ex1par} in \cref{numresults}, thus ensuring \cref{assumptionrates}. 
\Cref{nonafig} shows the computed spectrum of $\mathcal B$ (left, recall that $\gamma_0=0$) and the convergence diagram for $\lambda_0$ and $\lambda_1$. The error for $\lambda_0$ is at the machine precision already for small values of $N=M$ while the error for $\lambda_1$ decreases with infinite order (observe that the relevant eigenfunction is of class $C^{\infty}$).
\begin{figure}
\centering
\includegraphics[width=.8\textwidth]{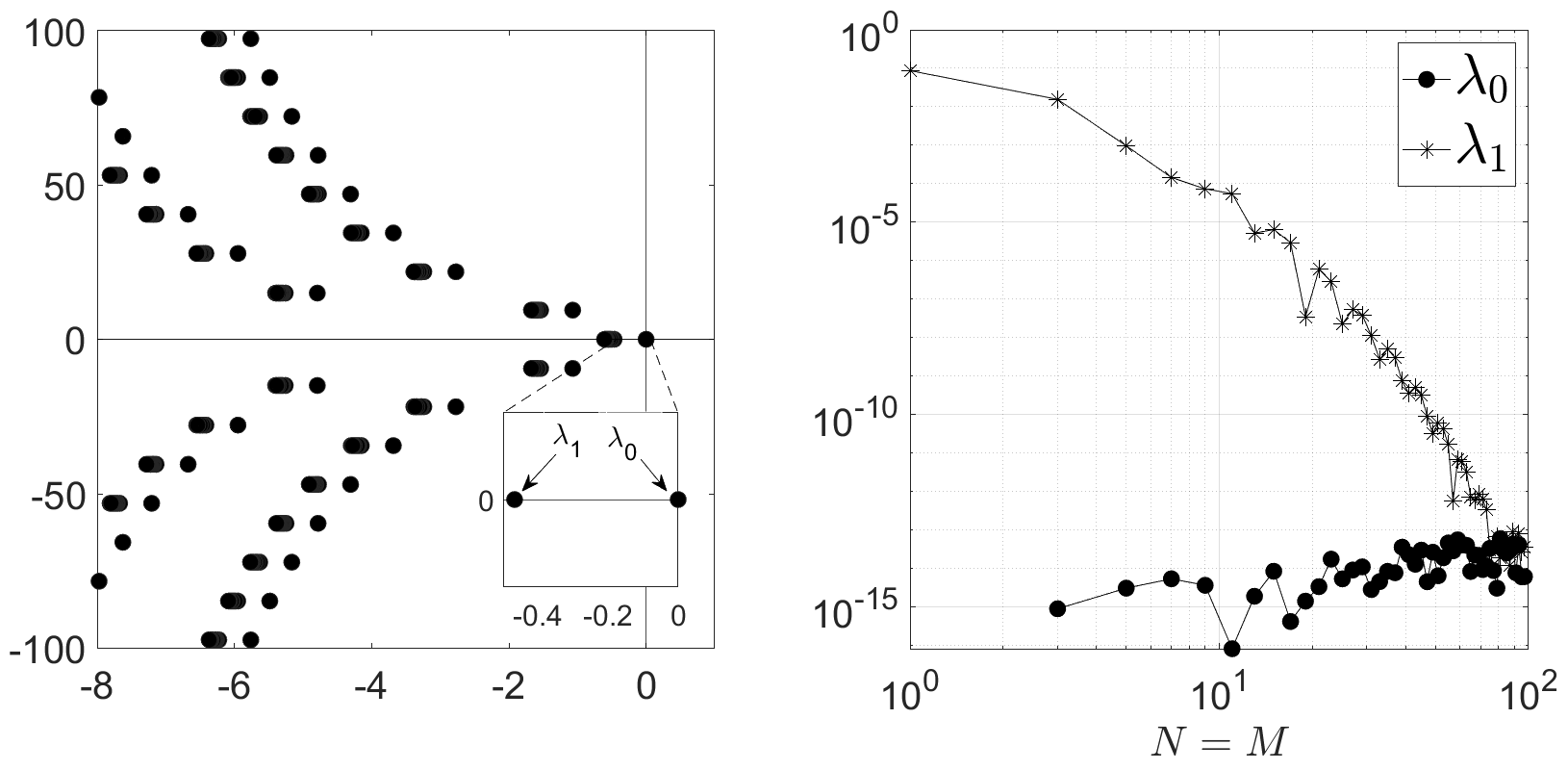}
\caption{\nameref{neumann}: spectrum of $\mathcal B$ in the complex plane computed with $N=M=100$ (left) and convergence diagram for the first two rightmost eigenvalues $\lambda_0$ and $\lambda_1$ (right)  with $d=l=1$. The reference value $\lambda_1\approx-0.467278026690667$ is obtained with $N=M=1000$.}\label{nonafig}
\end{figure}
\section{Conclusions}\label{conclusions}
In this work we proposed a numerical approach to approximate the spectrum of the infinitesimal generator of the semigroup associated to an age-structured model with nonlocal diffusion of Dirichlet or Neumann type and proved the convergence of the approximated eigenvalues in the case of separable model coefficients (\cref{assumptionrates}).

Numerical results suggest that the proposed numerical scheme is able to approximate the principal eigenvalue also in the case of parameters dependent on the spatial variable.
Future work will investigate the convergence of the method in this general case and also with regards to the monotonicity of the approximation with respect to the discretization parameters (which we did not observe in general but could be a desirable property under additional hypotheses). It is also interest of the authors to tackle the problem of the numerical approximation of $R_0$ in age-structured epidemic models involving nonlocal diffusion and also additional structuring variables \cite{breda2021bivariate, breda2021efficient, breda2020collocation,  kang2021mathematical, kang2020age}.
Another direction could be investigating the stability of models on unbounded spatial domains \cite{ruan2007spatial}.

\appendix

\printbibliography

@article{ando2022pseudospectral,
title = {A pseudospectral method for investigating the stability of linear population models with two physiological structures},
journal = {Math. Biosci. Eng.},
volume = {20},
number = {3},
pages = {4493-4515},
year = {2023},
issn = {1551-0018},
doi = {10.3934/mbe.2023208},
author = {Andò, A. and De Reggi, S. and Liessi, D. and Scarabel, F.},
keywords = {bivariate collocation, infinitesimal generator, partial differential equations, stability of equilibria, physiologically structured populations},
}

@article{bates2007existence,
  title={Existence, uniqueness and stability of the stationary solution to a nonlocal evolution equation arising in population dispersal},
  author={Bates, P. W. and Zhao, G.},
  journal={J. Math. Anal.},
  volume={332},
  number={1},
  pages={428--440},
  year={2007},
  publisher={Elsevier},
  doi={https://doi.org/10.1016/j.jmaa.2006.09.007},
}

@article{breda2006pseudospectral,
  title={Pseudospectral approximation of eigenvalues of derivative operators with non-local boundary conditions},
  author={Breda, D. and Maset, S. and Vermiglio, R.},
  journal={Appl. Numer. Math.},
  volume={56},
  number={3-4},
  pages={318--331},
  year={2006},
  doi={https://doi.org/10.1016/j.apnum.2005.04.011},
  publisher={Elsevier}
}

@article{breda2007stability,
  title={Stability analysis of age-structured population equations by pseudospectral differencing methods},
  author={Breda, D. and Cusulin, C. and Iannelli, M. and Maset, S. and Vermiglio, R.},
  journal={J. Math. Biol.},
  volume={54},
  number={5},
  pages={701--720},
  year={2007},
  publisher={Springer},
  doi = {https://doi.org/10.1007/s00285-006-0064-4}
}

@article{breda2008stability,
  title={Stability analysis of the {G}urtin--{M}ac{C}amy model},
  author={Breda, D. and Iannelli, M. and Maset, S. and Vermiglio, R.},
  journal={SIAM J. Numer. Anal.},
  volume={46},
  number={2},
  pages={980--995},
  year={2008},
  doi={https://doi.org/10.1137/070685658},
  publisher={SIAM}
}

@article{breda2012computing,
  title={Computing the eigenvalues of {G}urtin--{M}ac{C}amy models with diffusion},
  author={Breda, D. and Maset, S. and Vermiglio, R.},
  journal={IMA J. Numer. Anal.},
  volume={32},
  number={3},
  pages={1030--1050},
  year={2012},
  doi={https://doi.org/10.1093/imanum/drr004},
  publisher={Oxford University Press}
}

@article{breda2012approximation,
  title={Approximation of eigenvalues of evolution operators for linear retarded functional differential equations},
  author={Breda, D. and Maset, S. and Vermiglio, R.},
  journal={SIAM J. Numer. Anal.},
  volume={50},
  number={3},
  pages={1456--1483},
  year={2012},
  doi={https://doi.org/10.1137/100815505},
  publisher={SIAM}
}

@article{breda2013numerical,
  title={A numerical approach for investigating the stability of equilibria for structured population models},
  author={Breda, D. and Diekmann, O. and Maset, S. and Vermiglio, R.},
  journal={J. Biol. Dyn.},
  volume={7},
  number={sup1},
  pages={4--20},
  year={2013},
  doi={https://doi.org/10.1080/17513758.2013.789562},
  publisher={Taylor \& Francis}
}

@article{scarabel2021numerical,
  title={Numerical bifurcation analysis of physiologically structured population models via pseudospectral approximation},
  author={Breda, D. and Diekmann, O. and Gyllenberg, M. and Scarabel, F. and Vermiglio, R.},
  journal={Vietnam J. Math.},
  volume={49},
  number={1},
  pages={37--67},
  year={2021},
  doi={https://doi.org/10.1007/s10013-020-00421-3},
  publisher={Springer}
}

@article{breda2021efficient,
  title={Efficient numerical computation of the basic reproduction number for structured populations},
  author={Breda, D. and Florian, F. and Ripoll, J. and Vermiglio, R.},
  journal={J. Comput. Appl. Math},
  volume={384},
  pages={113165},
  year={2021},
  doi={https://doi.org/10.1016/j.cam.2020.113165},
  publisher={Elsevier}
}

@article{breda2020collocation,
  title={Collocation of next-generation operators for computing the basic reproduction number of structured populations},
  author={Breda, D. and Kuniya, T. and Ripoll, J. and Vermiglio, R.},
  journal={J. Sci. Comput.},
  volume={85},
  number={2},
  pages={1--33},
  year={2020},
  doi={https://doi.org/10.1007/s10915-020-01339-1},
  publisher={Springer}
}

@article{breda2021bivariate,
  title={Bivariate collocation for computing {$R_0$} in epidemic models with two structures},
  author={Breda, D. and De Reggi, S. and Scarabel, F.  and Vermiglio, R. and Wu, J.},
  journal={Comput. Math. with Appl.},
  year={2021},
  doi={https://doi.org/10.1016/j.camwa.2021.10.026},
  publisher={Elsevier}
}

@book{brezis2011functional,
  title={Functional analysis, Sobolev spaces and partial differential equations},
  author={Brezis, H.},
  volume={2},
  number={3},
  year={2011},
  publisher={Springer}
}

@book{canuto1988spectral,
  title={Spectral methods in fluid dynamics},
  author={Canuto, C. and Hussaini, M.Y. and Quarteroni, A. and Zang, T.A.},
  year={1988},
  publisher={Springer-Verlag}
}

@book{chatelin2011spectral,
  title={Spectral approximation of linear operators},
  author={Chatelin, F.},
  year={2011},
  publisher={SIAM}
}

@phdthesis{cusulin2006diffusion,
  title={Diffusion and age in population dynamics},
  author={Cusulin, C.},
  year={2006},
  school={University of Trento, IT}
}

@article{diekmann2020pseudospectral,
  title={Pseudospectral discretization of delay differential equations in sun-star formulation: Results and conjectures},
  author={Diekmann, O. and Scarabel, F. and Vermiglio, R.},
  journal={Discrete Cont. Dyn.-S},
  volume={13},
  number={9},
  pages={2575--2602},
  year={2020},
  publisher={Discrete and Continuous Dynamical Systems-S},
  doi={https://doi.org/10.3934/dcdss.2020196}
}

@Misc{ducrot2022age,
title = {Age-structured Models with Nonlocal Diffusion of {D}irichlet Type, {$I$}: Principal Spectral Theory and Limiting Properties},
author = {Ducrot, A. and Kang, H. and Ruan, S.},
eprint = {2205.09642},
year = 2022,
month = may,
eprintclass = {math.AP}
}

@book{engel2000one,
  title={One-parameter semigroups for linear evolution equations},
  author={Engel, K.J. and Nagel, R. and Brendle, S.},
  volume={194},
  year={2000},
  publisher={Springer}
}

@article{erdos1937interpolation,
  title={On interpolation {I}},
  author={Erdos, P. and Tur{\'a}n, P.},
  journal={Ann. Math.},
  pages={142--155},
  year={1937},
  doi={https://doi.org/10.2307/1968516},
  publisher={JSTOR}
}

@book{kress1989linear,
  title={Linear integral equations},
  author={Kress, R. and Maz'ya, V. and Kozlov, V.},
  volume={82},
  year={1989},
  publisher={Springer}
}

@article{kao2010random,
  title={Random dispersal vs. non-local dispersal},
  author={Kao, C.Y. and Lou, Y. and Shen, W.},
  journal={Discrete. Contin. Dyn. Syst. Ser. A},
  volume={26},
  number={2},
  pages={551},
  year={2010},
  doi={https://doi.org/10.3934/dcds.2010.26.551},
  publisher={American Institute of Mathematical Sciences}
}

@article{kang2021mathematical,
  title={Mathematical analysis on an age-structured {SIS} epidemic model with nonlocal diffusion},
  author={Kang, H. and Ruan, S.},
  journal={J. Math. Biol.},
  volume={83},
  number={1},
  pages={5},
  year={2021},
  doi={https://doi.org/10.1007/s00285-021-01634-x},
  publisher={Springer}
}

@article{kuniya2018global,
  title={Global dynamics of an {SIR} epidemic model with nonlocal diffusion},
  author={Kuniya, T. and Wang, J.},
  journal={Nonlinear Anal. Real World Appl.},
  volume={43},
  pages={262--282},
  year={2018},
  doi={https://doi.org/10.1016/j.nonrwa.2018.03.001},
  publisher={Elsevier}
}

@article{garcia2009principal,
  title={On the principal eigenvalue of some nonlocal diffusion problems},
  author={J. Garc{\'i}a-Meli{\'a}n and J. D. Rossi},
  journal={J. Differ. Equ.},
  volume={246},
  number={1},
  pages={21--38},
  year={2009},
  doi={https://doi.org/10.1016/j.jde.2008.04.015},
  publisher={Elsevier}
}

@article{horn1991topics,
  title={Topics in matrix analysis, 1991},
  author={Horn, R. A. and Johnson, C. R.},
  journal={Cambridge University Presss, Cambridge},
  volume={37},
  pages={39},
  year={1991}
}

@article{kang2020age,
  title={Age-structured population dynamics with nonlocal diffusion},
  author={H. Kang and S. Ruan and X. Yu},
  journal={J. Dyn. Diff. Equat.},
  pages={1--35},
  year={2020},
  doi={https://doi.org/10.1007/s10884-020-09860-5},
  publisher={Springer}
}

@article{kang2021nonlinear,
  title={Nonlinear age-structured population models with nonlocal diffusion and nonlocal boundary conditions},
  author={Kang, H. and Ruan, S.},
  journal={J. Differ. Equ.},
  volume={278},
  pages={430--462},
  year={2021},
  doi={https://doi.org/10.1016/j.jde.2021.01.004},
  publisher={Elsevier}
}

@article{kang2022principal,
  title={Principal spectral theory and asynchronous exponential growth for age-structured models with nonlocal diffusion of {N}eumann type},
  author={Kang, H. and Ruan, S.},
  journal={Math. Ann.},
  volume={384},
  number={1},
  pages={1--49},
  year={2022},
  doi={https://doi.org/10.1007/s00208-021-02270-y},
  publisher={Springer}
}

@article{kang2021approximation,
  title={Approximation of random diffusion by nonlocal diffusion in age-structured models},
  author={Kang, H. and Ruan, S.},
  journal={Z. Angew. Math. Phys.},
  volume={72},
  number={3},
  pages={1--17},
  year={2021},
  doi={https://doi.org/10.1007/s00033-021-01538-2},
  publisher={Springer}
}

@book{inaba2017age,
  title={Age-structured population dynamics in demography and epidemiology},
  author={Inaba, H.},
  year={2017},
  publisher={Springer}
}

@book{gripenberg1990volterra,
  title={Volterra integral and functional equations},
  author={Gripenberg, G. and Londen, S.-O. and Staffans, O.},
  number={34},
  year={1990},
  publisher={Cambridge University Press}
}

@article{iannelli1995mathematical,
  title={Mathematical theory of age-structured population dynamics},
  author={Iannelli, M.},
  journal={Giardini editori e stampatori in Pisa},
  year={1995}
}

@book{auger2008structured,
  title={Structured population models in biology and epidemiology},
  author={Auger, P. and Magal, P. and Ruan, S.},
  volume={1936},
  year={2008},
  publisher={Springer}
}

@article{liu2015nonlocal,
  title={A nonlocal diffusion model of a single species with age structure},
  author={Liu, L. and Weng, P.},
  journal={J. Math. Anal.},
  volume={432},
  number={1},
  pages={38--52},
  year={2015},
  doi={https://doi.org/10.1016/j.jmaa.2015.06.052},
  publisher={Elsevier}
}

@book{rivlin1981introduction,
  title={An introduction to the approximation of functions},
  author={Rivlin, T. J.},
  year={1981},
  publisher={Courier Corporation}
}

@incollection{rossi2022first,
  title={The First Eigenvalue for Nonlocal Operators},
  author={Rossi, J. D.},
  booktitle={Operator and Norm Inequalities and Related Topics},
  pages={741--772},
  year={2022},
  doi={https://doi.org/10.1007/978-3-031-02104-6_22},
  publisher={Springer}
}

@incollection{ruan2007spatial,
  title={Spatial-temporal dynamics in nonlocal epidemiological models},
  author={Ruan, S.},
  booktitle={Mathematics for life science and medicine},
  pages={97--122},
  year={2007},
  doi={https://doi.org/10.1007/978-3-540-34426-1_5},
  publisher={Springer}
}

@article{trefethen2008gauss,
  title={Is Gauss quadrature better than {C}lenshaw--{C}urtis?},
  author={Trefethen, L. N.},
  journal={SIAM Rev.},
  volume={50},
  number={1},
  pages={67--87},
  year={2008},
  doi={https://doi.org/10.1137/060659831},
  publisher={SIAM}
}

@article{webb1987operator,
  title={An operator-theoretic formulation of asynchronous exponential growth},
  author={Webb, G. F.},
  journal={Trans. Am. Math. Soc.},
  volume={303},
  number={2},
  pages={751--763},
  doi={https://doi.org/10.2307/2000695},
  year={1987}
}

@article{xu2021spatial,
  title={Spatial propagation in nonlocal dispersal {F}isher-{KPP} equations},
  author={Xu, W.B. and Li, W.T. and Ruan, S.},
  journal={J. Funct. Anal.},
  volume={280},
  number={10},
  pages={108957},
  year={2021},
  doi={https://doi.org/10.1016/j.jfa.2021.108957},
  publisher={Elsevier}
}

@article{yang2019dynamics,
  title={Dynamics of a nonlocal dispersal {SIS} epidemic model with {N}eumann boundary conditions},
  author={Yang, F.Y. and Li, W.T. and Ruan, S.},
  journal={J. Differ. Equ.},
  volume={267},
  number={3},
  pages={2011--2051},
  year={2019},
  doi={https://doi.org/10.1016/j.jde.2019.03.001},
  publisher={Elsevier}
}

@article{zhao2018spatial,
  title={Spatial and temporal dynamics of a nonlocal viral infection model},
  author={Zhao, G. and Ruan, S.},
  journal={SIAM J. Appl. Math.},
  volume={78},
  number={4},
  pages={1954--1980},
  year={2018},
  doi={https://doi.org/10.1137/17M1144106},
  publisher={SIAM}
}

\end{document}